\patchcmd{\thebibliography}{\section*{\refname}}{}{}{}
\definecolor{darkviolet}{rgb}{0.58,0,0.83} 
\newtheorem{theorem}{Theorem}
\newtheorem{proposition}[theorem]{Proposition}
\newtheorem{corollary}[theorem]{Corollary}
\newtheorem{remark}[theorem]{Remark}
\DeclareMathOperator*{\esssup}{ess\,\sup}
\newcommand{\pbox}{\hfill$\Box$\\}
\newcommand{\C}{{\mathcal{C}o_\pi}\,}
\newcommand{\CiP}[1]{{\mathcal{C}o_{\pi_1}L^{#1}_{1/w_1}(G_1)}\,}
\newcommand{\CiiP}[1]{{\mathcal{C}o_{\pi_2}L^{#1}_{1/w_2}(G_2)}\,}
\newcommand{\Ci}[1]{{\mathcal{C}o_{\pi_1}L^{#1}_{w_1}(G_1)}\,}
\newcommand{\Cii}[1]{{\mathcal{C}o_{\pi_2}L^{#1}_{w_2}(G_2)}\,}
\newcommand{\Cim}[1]{{\mathcal{C}o_{\pi_1}L^{#1}_{m_1}(G_1)}\,}
\newcommand{\Ciim}[1]{{\mathcal{C}o_{\pi_2}L^{#1}_{m_2}(G_2)}\,}
\newcommand{\cD}{\mathcal{D}}
\newcommand{\R}{\mathbb{R}}
\newcommand{\CC}{\mathbb{C}}
\newcommand{\rd}{\mathbb{R}^d}
\newcommand{\rdd}{\mathbb{R}^{2d}}
\newcommand{\Z}{\mathbb{Z}}
\newcommand{\I}{\mathcal{I}}
\newcommand{\HS}{\mathcal{HS}}
\renewcommand{\H}{\mathcal{H}}
\newcommand{\dom}{{\sf Dom}\,}
\newcommand{\ran}{{\sf Ran}\,}
\definecolor{darkviolet}{rgb}{0.58,0,0.83} 
\newcommand{\modsp}{modulation space}
\renewcommand{\leq}{\leqslant}
\renewcommand{\geq}{\geqslant}
\newcommand{\paff}{\pi_{\mathrm{aff}}}
\newcommand{\pwh}{\pi_{\mathrm{WH}}}
\begin{document}

\begin{abstract}
 We prove  general kernel theorems for operators acting between
 coorbit spaces. These are Banach spaces associated to an integrable
 representation of a locally compact group and contain most of the
 usual function spaces (Besov spaces, modulation spaces, etc.). A 
 kernel  theorem describes the  form of every bounded operator between
 a coorbit  space of test functions and distributions by means of a
 kernel in a coorbit space associated to the tensor product
 representation. As special cases we recover Feichtinger's kernel
 theorem for modulation spaces and the recent generalizations
 by  Cordero and Nicola. We also obtain a kernel theorem for operators
 between the Besov spaces $\dot{B}^0_{1,1}$ and $\dot{B}^{0}_{\infty, \infty }$.  
\end{abstract}

\date{}
\title{Kernel Theorems in  Coorbit Theory}
\author{Peter Balazs}
\address{Acoustics Research Institute\\Austrian Academy of Sciences\\Wohllebengasse 12-14, 1040 Vienna\\ Austria }
\email{peter.balazs@oeaw.ac.at}
\author{Karlheinz Gr\"ochenig}
\address{Faculty of Mathematics 
University of Vienna 
Oskar-Morgenstern-Platz 1 
A-1090 Vienna, Austria}
\email{karlheinz.groechenig@univie.ac.at}
\author{Michael Speckbacher}
\address{Institut de Math{\'e}matiques de Bordeaux \\Universit{\'e} de Bordeaux \\351, cours de la Lib{\'e}ration - F 33405 TALENCE \\ France }
\email{speckbacher@kfs.oeaw.ac.at}

\subjclass[2010]{42B35, 42C15, 46A32, 47B34}
\date{}
\keywords{Kernel theorems, coorbit theory, continuous frames, operator representation, tensor products, Hilbert-Schmidt operators}
\thanks{P.B. and M.S. were 
  supported in part by the  START-project FLAME (’Frames and Linear Operators for Acoustical Modeling and Parameter Estimation’; Y 551-N13)   of the
Austrian Science Fund (FWF), and K.\ G.\ was
  supported in part by the  project P31887-N32  of the
Austrian Science Fund (FWF)}
\maketitle

\section{Introduction}\label{sec:intro}

Kernel theorems assert that every ``reasonable'' operator can written
 as a ``generalized'' integral operator. For instance, the Schwartz
 kernel theorem states that a continuous linear  
operator $A:\mathcal{S}(\R^d)\rightarrow\mathcal{S}^\prime(\R^d)$ possesses
a unique distributional kernel $K\in \mathcal{S}^\prime(\R^{2d})$, such that
\begin{equation}
  \label{eq:c1}
\langle Af,g\rangle= \langle K,g\otimes
f \rangle,\
f,g\in\mathcal{S}(\R^d) \, . 
\end{equation}
If $K$ is a locally
integrable function, then
$$
\langle Af, g \rangle =\int_{\R^d}K(x,y)f(y) \overline{g(x)} dy dx , \
f, g \in\mathcal{S}(\R^d), 
$$
and thus $A$  has indeed the form of an integral operator. Similar
kernel theorems hold for continuous operators from $\mathcal{D}(\rd )
\to \cD ^\prime (\rd )$ \cite[Theorem 5.2]{hoe90} and for Gelfand-Shilov spaces and
their distribution spaces \cite{geshi64}. The importance of these kernel
theorems stems from the fact that they offer a general formalism for
the description of linear operators.

In the context of time-frequency analysis,
Feichtinger's kernel theorem \cite{feichtinger80cras} (see also
\cite{FGchui} and \cite[Theorem 14.4.1]{groe1}) states that every bounded linear operator
from the \modsp\ $M^1(\rd )$ to the \modsp\ $M^\infty (\rd )$ can be represented in the form
\eqref{eq:c1} with a kernel in $M^\infty (\rdd )$. The advantage of
this kernel theorem is that both the space of test functions $M^1(\rd
)$ and the distribution space $M^\infty (\rd ) = M^1(\rd )^*$ are
Banach spaces and thus technically easier than the locally convex
spaces $\mathcal{S}(\R ^d)$ and $\mathcal{S}^\prime (\R ^d)$. 

Recently, Cordero and Nicola \cite{coni17} revisited   Feichtinger's kernel
theorem  and proved several new  kernel theorems that ``{\em do not have a
counterpart in distribution theory}''. They argue that ``{\em this reveals
the superiority, in some respects, of the modulation space formalism
upon distribution theory}''. While we agree full-heartedly with this
claim, we would like to add a more abstract point of view and argue
that the deeper reason for this superiority  lies in the theory of
coorbit spaces  and in the convenience of Schur's test for integral
operators. Indeed, we 
will  prove kernel
theorems similar to Feichtinger's kernel theorem  for many
coorbit spaces. 

The main idea  is to investigate operators in a transform domain, after
taking  a short-time Fourier transform, a wavelet transform, or an
abstract wavelet transform, i.e., a continuous transform with respect
to a unitary group 
representation. In this new representation every operator between a
suitable space of test functions and distributions is  an integral
operator. The standard  boundedness conditions of Schur's
test then yield strong kernel theorems.

The technical framework for this idea is coorbit  theory which was  introduced and studied in
\cite{fegr88,fegr89,fegr89II,groe91} for the construction and analysis
of function spaces by means of a 
generalized wavelet transform.  The main idea is that functions
 in the  standard function spaces, such as Besov spaces and
modulation spaces, can be characterized by the  decay or integrability
properties of an associated transform (the wavelet transform or the
short-time Fourier transform). 
In the abstract setting,  $G$ is a locally compact group and $\pi:G\rightarrow
\mathcal{U}(\H)$ is an irreducible, unitary, integrable representation
of $G$. Leaving technical details aside,  the coorbit space $\C
L^p_w(G)$ consists of all distributions $f$ in a suitable distribution
space, such that the representation coefficient  $g\mapsto \langle
f,\pi(g)\psi\rangle $ is in the weighted space $L^p_w(G)$. 

Next, let $G_1$ and $G_2$ be  two locally
compact  groups, and  $(\pi_1, \H _1)$ and $(\pi_2, \H _2) $ be
irreducible, unitary, integrable   representations of $G_1$ and $G_2$
respectively.

Let  $A$ be a bounded linear  operator between $\Ci{1}$ and
$\CiiP{\infty}$. Our main insight is that such an  operator  can be
described by a kernel in a coorbit space that is related to the tensor
product representation   $\pi=\pi_2\otimes\pi_1$ of $G=G_1\times G_2$
on the  tensor product space $\H_2\otimes \H_1$.  The following
non-technical  formulation offers  a flavor of  our main result in 
Theorem~\ref{thm-kernel-2}:

\emph{A linear operator $A$ is bounded from  $\Ci{1}$ to 
$\CiiP{\infty}$, if 
and only if there exists a kernel $K\in \C L^\infty_{w_1^{-1} \otimes w_2^{-1}}(G_1\times G_2)$ such
that  }
\begin{equation}
\langle A\upsilon,\varphi\rangle=\langle K,\varphi\otimes\upsilon\rangle,
\end{equation} 
for all  $\upsilon\in\Ci{1},\ \varphi\in\Cii{1}$.

This statement is not just a mere abstraction and  generalization of
the classical  kernel theorem. With the choice
of a specific group and representation 
one obtains explicit kernel theorems. For instance, using the
Schr\"odinger representation of the Heisenberg group,  one recovers
Feichtinger's original kernel theorem. The added value is our insight
that the  conditions on the
kernel  of \cite{coni17} in terms of mixed modulation spaces
\cite{bi10} amount to  coorbit spaces with respect to the tensor product
representation. Choosing  the $ax+b$-group and the continuous wavelet
representation, one obtains a  kernel theorem for  all bounded
operators operators between the Besov spaces $\dot{B}^0_{1,1} $ and  $\dot{B}^0 _{\infty ,\infty }$
with a kernel in a space of dominating mixed smoothness. This  class of
function spaces has been studied extensively ~\cite{SU09,ST87} and is
by no means artificial. 

By using suitable versions of Schur's test, it is then possible to
derive  characterizations for the boundedness of operators
between other coorbit spaces. For example, in
Theorem~\ref{thm-kernel-char1}  we will prove
the following, with  $\frac{1}{p} + \frac{1}{q} = 1$: 

\medskip 
\hspace{-0.3cm}\begin{tabular}{lll}
$(i)$ \ $A:\Ci{1}\rightarrow \Cii{p}$ bounded &$\Leftrightarrow$ &
$K\in \C \mathcal{L}^{p,\infty}_{ 1/w_1\otimes w_2}(G_1\times G_2)$,\\

$(ii)$ $A:\Ci{p}\rightarrow \Cii{\infty}$ bounded &$\Leftrightarrow$& 
$K\in \C L^{q,\infty}_{1/w_1\otimes w_2}(G_1 \times G_2)$,
\end{tabular}

\medskip

\noindent where the mixed-norm Lebesgue spaces $\mathcal{L}^{p,q}$ and
$L^{p,q}$ on $G_1 \times G_2$  are defined in \eqref{eq:mixed-1} and \eqref{eq:mixed-2} respectively.



The paper is organized as follows. In Section~\ref{sec:prel} we present the basics of tensor products 
 and coorbit space theory. The theory of coorbit spaces of kernels
 with respect to products of integrable representations is developed
 in  Section~\ref{sec:coorb-prod}. Our main results, the kernel
 theorems,  are proved in Section~\ref{sec:kernel} and applied to
 particular examples of group representations in Section~\ref{sec:ex}.  

We note that our proofs require a meaningful formulation of coorbit
theory. One can therefore prove kernel theorems also   in the context of
 coorbit space theories~\cite{CO11,DMLS17}, e.g., for  certain reducible
 representations. 

\section{Preliminaries on Tensor Products and Coorbit Spaces}\label{sec:prel}

\subsection{Tensor Products and  Hilbert-Schmidt Operators 
}\label{subsec:tensor-mixed}

The theory of tensor products is at the heart of kernel theorems for
operators. Algebraically, a simple tensor of two vectors (in two possibly
different Hilbert spaces) is a formal product of two vectors
$f_1 \otimes f_2$,  and the tensor product $\H_1 \otimes \H_2$ is
obtained by taking the completion of  all linear combinations  of
simple tensors with respect to the inner product
$$
\langle f_1 \otimes
f_2, g_1 \otimes g_2 \rangle := \langle f_1, g_1 \rangle \, \langle
g_2,f_2 \rangle \, .
$$
This tensor product is homogeneous in the following sense: 
    $\alpha\cdot(f_1\otimes f_2)=(\alpha f_1)\otimes f_2=f_1\otimes
    (\overline{\alpha}f_2)$. Note explicitly that the product $f_1
    \otimes f_2$ is anti-linear in the second factor. In some books
    this is done by introducing the dual Hilbert space $\H _{2}^\prime
    $~\cite{KR1-97}.

    If each  Hilbert space is an  $L^2$-space $\H_1
= L^2(X,\mu ),\ \H_2 = L^2(Y,\nu )$, then the simple tensor
$f\otimes g$ is just the product $(x,y) \mapsto$ $f(x)\cdot \overline{g(y)}$ and the tensor
product becomes the product space $ \H_1 \otimes \H_2 = L^2(X,\mu
) \otimes L^2(Y,\nu ) = L^2(X\times Y, \mu \times \nu )$.

The connection between functions and operators arises in the analytic
approach to tensor products. We interpret a function of \emph{two}
variables as an integral kernel for an operator. Thus a simple tensor
$f_1\otimes f_2$ of two functions becomes the rank one operator $f \mapsto
\langle f, f_2 \rangle f_1$ with integral kernel $ f_1(x)\overline{f_2(y)}$,
and a general $k\in L^2(X\times Y,\mu \times \nu )$ becomes a
Hilbert-Schmidt operator from $L^2(Y,\nu)$ to $L^2(X,\mu)$. The systematic, analytic treatment of
general tensor products of two Hilbert-spaces often  defines the
tensor product as a space of Hilbert-Schmidt operators between $\H
_2$ and $\H_1$. We note that his definition is already based on the
characterization of Hilbert-Schmidt operators and thus represents  a
non-trivial kernel theorem~\cite{con90}. 
Whereas the working mathematician
habitually identifies an operator with its distributions kernel, 
we will  make the conceptual  distinction  between tensor products and
operators for  our study of kernel theorems.

In the sequel we will denote the (distributional) kernel of an integral
operator  by $k$ and the abstract kernel in a tensor product by $K$.

\subsection{Coorbit Space Theory}\label{subsec:coorbit}
Let $G$ be a locally compact group with left Haar measure $ \int _G
\dots dg$, $\H$ be  a separable Hilbert
space, 
and $\mathcal{U}(\H)$ the group  of unitary operators acting on $\H$. A
continuous unitary group representation $\pi:G\rightarrow
\mathcal{U}(\H)$ is called \emph{square integrable}  \cite{DM76,alanga91-1},
if it is irreducible  and there exist $\psi\in\H$ such that  
\begin{equation}\label{square-int}
\int_G|\langle \psi,\pi(g)\psi\rangle|^2dg<\infty \, .
\end{equation}
A non-zero  vector $\psi$ satisfying \eqref{square-int} is called
\emph{admissible}. For every square integrable representation there
exist a densely defined operator $T$ such that $\forall f_1,f_2\in\H,\
\psi_1,\psi_2\in \dom(T)$, one has 
\begin{equation}\label{ortho-rel}
\int_G\langle f_1,\pi(g)\psi_1\rangle\langle \pi(g)\psi_2,f_2\rangle dg=\langle T\psi_2,T\psi_1\rangle\langle f_1,f_2\rangle.
\end{equation}

For fixed $\psi _1 = \psi _2 = \psi$  the representation coefficient
$f \mapsto V_\psi f(g):=\langle f,\pi(g)\psi\rangle$ is 
interpreted as   a
\emph{generalized wavelet transform}. The orthogonality relation
\eqref{ortho-rel} then implies that $V_\psi $ is   a multiple of an
isometry from $\H$ to $L^2(G)$. By using a weak interpretation of
vector-valued integrals, 
\eqref{ortho-rel} can also be recast as the  inversion formula
\begin{equation}\label{inversion}
f=\frac{1}{\|T\psi\|^2}\int_G\langle f,\pi(g)\psi\rangle\pi(g)\psi dg
\, .
\end{equation}

For the rest of this paper we assume without loss of generality  that
the chosen admissible vectors $\psi$ are normalized,
i.e. $\|T\psi\|=1$.  

The adjoint operator  $V_\psi^\ast: L^2(G) \to \H $ is formally
defined  by
$$
V_\psi^\ast F:=\int_G F(g)\pi(g)\psi dg. 
$$
Other domains  and convergence properties will be discussed later.

With this notation \eqref{inversion} says that $V_\psi^\ast
V_\psi=I_\H$ for all admissible and normalized vectors $\psi $, which
in the language of recent frame theory means that $\{\pi(g)\psi\}_{g\in
  G}$  is  a continuous Parseval frame. 
By \cite[Proposition 2.1]{bow17} one can  always assume that $G$ is $\sigma$-finite since we assume $\H$ to be separable.  

In coorbit theory one needs much  stronger hypotheses on $\pi $. 
The representation $\pi$ is called \emph{integrable} with respect to a
weight $w$ if there exists an admissible vector $\psi\in \H$ such that 
\begin{equation}\label{eq-integrable}
\int_G|\langle \psi,\pi(g)\psi\rangle| w(g) \, dg<\infty.
\end{equation}
Let $g_1,g_2,g_3\in G$. We call a weight  $w:G\rightarrow \R^+$
\emph{submultiplicative}, if  {$w(g_1g_2)\leq w(g_1)w(g_2)$, and a
  function $m:G\rightarrow \R^+$ \emph{w-moderate}, if it satisfies
  $m(g_1 g_2 g_3)\leq w(g_1)m(g_2)w(g_3)$. 
If $m$ is $w$-moderate, the weighted Lebesgue space  $L^p_m(G)$ is  then
invariant under left  translation $L_xf(y)=f(x^{-1}y)$ and under the
right translation  $R_xf(y)=f(yx)$.
Throughout this paper, we will assume that
the weight $w$ satisfies 
\begin{equation}\label{assumpt-on-w}
w(x)\geq C\max\left\{\alpha(x),\alpha(x^{-1}),\beta(x),\Delta(x^{-1})\beta(x^{-1})\right\},
\end{equation}
where $\alpha(x):=\|L_x\|_{L^p_m(G)\rightarrow L^p_m(G)}$, $\beta(x):=\|R_x\|_{L^p_m(G)\rightarrow L^p_m(G)}$, and $\Delta$ denotes the modular function of $G$.

Our standing assumption is that the representation $\pi $ of $G$
possesses an admissible vector $\psi $ such that $V_\psi \psi \in
L^1_w(G)$. We denote the corresponding set by 

$$\mathcal{A}_w(G):=\left\{\psi\in\H, \psi \neq 0: \ V_\psi\psi\in L^1_w(G)\right\}.$$
For fixed $\psi \in \mathcal{A}_w(G)$ the linear version of
$\mathcal{A}_w(G)$ 
\begin{equation}
\H^1_w:=\left\{f\in\H:\ V_\psi f\in L^1_w(G)\right\}
\end{equation}
is dense in $\H$. Let $(\H^1_w)^\sim$ denote the anti-dual of
$\H^1_w$, i.e., the space of anti-linear continuous functionals on
$\H^1_w$. As $\H^1_w$ is dense in $\H $, it follows that the inner product on $\H\times\H$  extends to $(\H^1_w)^\sim\times \H^1_w$ and so does the generalized wavelet transform.

The coorbit space  with respect to $L^p_m(G)$ is then defined by
$$
\C L^p_m(G):=\left\{f\in(\H^1_w)^\sim:\ V_\psi f\in L^p_m(G)\right\}
\, ,
$$
and is  equipped with the natural norm
$$\|f\|_{\C L^p_m(G)}:=\|V_\psi
f\|_{L^p_m(G)} \, .$$
With our assumptions on $\pi, \psi, m$, the coorbit
 space   $\C L^p_m(G)$ is a Banach space~ \cite{fegr89}. Alternatively, $\C
 L^p_m(G)$ for $p<\infty$ can be defined as the completion of $H_w^1$ with respect to
 this norm. 
Moreover,
\begin{equation}
  \label{eq:f1a}
\C L^2(G)=\H,\ \ \C L^1_w(G)=\H^1_w,\ \mbox{ and } \ \C
L^\infty_{1/w}(G)=(\H^1_w)^\sim =\H^\infty_{1/w} , 
\end{equation}
and
\begin{equation}
  \label{eq:f1}
  \H^1_w \subseteq \C L^p_m(G) \subseteq \H^\infty_{1/w}, 
\end{equation}
for $1\leq p\leq \infty $ and $w$-moderate weight $m$. 
In the context of coorbit space theory the space $\H _w^1$ serves as a
space of test functions, and $\H^\infty_{1/w} $ is the corresponding
distribution space. 

We quickly recall some of the fundamental properties of coorbit spaces, see for example \cite[Theorem 4.1, Theorem 4.2 and Proposition 4.3]{fegr89}. 
\begin{proposition}\label{props-of-coorbit}Let $\psi,\phi\in \mathcal{A}_w(G)$, $f\in \C L^p_m(G)$, $g\in \C L^q_{1/m}(G)$ and $F\in L^p_m(G)$.
Then the following properties hold:
\begin{enumerate}[(i)]\item
  $V_\psi:\C L^p_m(G)\rightarrow L^p_m(G)$ is an 
  isometry. \label{ana-iso}\vspace{0.05cm}
\item $\H _m^p$ is invariant with respect to $\pi $ and $\|\pi (g) f\|
  _{\C L^p_m(G)} \leq C w (g) \| f\|
  _{\C L^p_m(G)} $ for all $g\in G, f\in \C L^p_{m}(G) $.  \vspace{0.05cm}
  \item $V_\psi^\ast:L^p_m(G)\rightarrow
    \C L^p_m(G)$ is continuous. \label{synth-cont}\vspace{0.05cm}
    \item  $V_\psi^\ast V_\psi=I_{\C L^p_m(G)}$.\vspace{0.05cm}
  \item {\em Correspondence principle}: Let  $F\in L^p_m(G)$.  There exists
  $f\in \C L^p_m(G)$ such that $F=V_\psi f$ if and only if $F=F\ast V_\psi\psi$, where $\ast$ denotes convolution on $G$. \label{corr-princ} \vspace{0.05cm}
\item {\em Duality}: For  $1\leq p<\infty$, $\frac{1}{p}+\frac{1}{q}=1$, we have  $(\C L^p_m(G))^* = \C
  L^{q}_{1/m}(G)$, where   the duality is given by
  $$
  \langle f, g \rangle_{\C L^p_m(G), \C
  L^{q}_{m}(G)} = \langle V_\psi f , V_\psi g \rangle _{L^p_m(G),
    L^{q}_{1/m}(G)} \, .\vspace{0.05cm}
  $$\label{duality}
\vspace{-0.45cm}  
\item The definition of $\C L^p_m(G)$ is independent
of the particular choice of the window function from
$\mathcal{A}_w(G)$. In particular, $\|V_\psi
f\|_{L^p_m(G)}\asymp\|V_\phi f\|_{L^p_m(G)}$ for arbitrary non-zero $\phi ,\psi
\in \mathcal{A}_w(G)$. \vspace{0.05cm}
 \end{enumerate}

\end{proposition}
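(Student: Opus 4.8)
The plan is to deduce all seven statements from one structural fact: the wavelet transform $V_\psi$ carries $\C L^p_m(G)$ isometrically onto a reproducing subspace of $L^p_m(G)$ on which the abstract operators act as convolutions, so that every assertion reduces to a convolution estimate on the weighted spaces. Two elementary identities drive this. The covariance relation follows from unitarity, $V_\psi(\pi(x)f)(g)=\langle \pi(x)f,\pi(g)\psi\rangle=\langle f,\pi(x^{-1}g)\psi\rangle=L_x(V_\psi f)(g)$, so $V_\psi\circ\pi(x)=L_x\circ V_\psi$. The reproducing identity is obtained by pairing the inversion formula \eqref{inversion} against $\pi(x)\psi$: $V_\psi(V_\psi^\ast F)(x)=\int_G F(g)\langle\pi(g)\psi,\pi(x)\psi\rangle\,dg=(F\ast V_\psi\psi)(x)$, and in particular $V_\psi f=V_\psi f\ast V_\psi\psi$. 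Everything then rests on the convolution relation $\|F\ast V_\psi\psi\|_{L^p_m(G)}\le C\,\|F\|_{L^p_m(G)}\,\|V_\psi\psi\|_{L^1_w(G)}$, whose validity is exactly what the $w$-moderateness of $m$ together with the standing hypothesis \eqref{assumpt-on-w} on $w$ guarantee, the right-hand side being finite because $\psi\in\mathcal{A}_w(G)$.

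Granting these, the first five items are short. Statement (i) is the definition of the coorbit norm. For (ii) the covariance relation gives $\|\pi(x)f\|_{\C L^p_m(G)}=\|L_x V_\psi f\|_{L^p_m(G)}\le\alpha(x)\|V_\psi f\|_{L^p_m(G)}\le C\,w(x)\,\|f\|_{\C L^p_m(G)}$, using $\alpha(x)\le C w(x)$ from \eqref{assumpt-on-w}. For (iii) the reproducing identity and the convolution estimate yield $\|V_\psi^\ast F\|_{\C L^p_m(G)}=\|F\ast V_\psi\psi\|_{L^p_m(G)}\le C\|F\|_{L^p_m(G)}$. For the correspondence principle (v), the ``only if'' direction is the identity $V_\psi f=V_\psi f\ast V_\psi\psi$; conversely, if $F=F\ast V_\psi\psi$ then $f:=V_\psi^\ast F$ satisfies $V_\psi f=F\ast V_\psi\psi=F$, so $F$ lies in the range of $V_\psi$. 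Specialising this to $F=V_\psi f$ gives $V_\psi(V_\psi^\ast V_\psi f)=V_\psi f$, and injectivity of $V_\psi$ forces $V_\psi^\ast V_\psi=I$ on $\C L^p_m(G)$, which is (iv).

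The last two items exploit the subspace structure. By (i) and (v), $V_\psi$ is an isometric isomorphism of $\C L^p_m(G)$ onto the closed subspace $M^p_m:=\{F\in L^p_m(G):F=F\ast V_\psi\psi\}$, which by (iii) is the range of the bounded idempotent $PF:=F\ast V_\psi\psi$ and hence complemented in $L^p_m(G)$. For the duality (vi) I would combine $(L^p_m(G))^\ast=L^q_{1/m}(G)$ with this complementation: a functional on $\C L^p_m(G)$ transports to $M^p_m$, extends via $P^\ast$ to $L^p_m(G)$, is represented by some $H\in L^q_{1/m}(G)$, and, after applying $P^\ast$, is of the form $H=V_\psi g$ for a unique $g\in\C L^q_{1/m}(G)$, giving the pairing $\langle V_\psi f,V_\psi g\rangle_{L^p_m,L^q_{1/m}}$. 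For the window independence (vii), the same computation that produced the reproducing identity gives the change-of-window formula $V_\phi f=V_\psi f\ast V_\phi\psi$; the convolution estimate then yields $\|V_\phi f\|_{L^p_m}\le C\|V_\psi f\|_{L^p_m}\|V_\phi\psi\|_{L^1_w}$, and exchanging $\phi$ and $\psi$ gives the reverse bound, hence the norm equivalence.

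The main obstacle is twofold. First, (vi) and (vii) require the foundational lemma that the cross-coefficients $V_\phi\psi$ remain in $L^1_w(G)$ for all $\phi,\psi\in\mathcal{A}_w(G)$, whereas only $V_\psi\psi\in L^1_w(G)$ is assumed; this is obtained from a pointwise bound of the form $|V_\phi\psi|\le |V_\psi\psi|\ast|V_\psi\phi|$ together with \eqref{assumpt-on-w}, and it is what makes $\H^1_w$ and all the pairings above independent of the chosen window. Second, and more seriously, every step above is transparent on the dense test space $\H^1_w=\C L^1_w(G)$ but must be extended across the whole range $1\le p\le\infty$, including the endpoint $p=\infty$ where $\C L^\infty_{1/w}(G)=\H^\infty_{1/w}$ is the distribution space and no density argument is available; there the operator $V_\psi^\ast$, the vector-valued integral in \eqref{inversion}, and the reproducing identity must all be reinterpreted weakly against test functions in $\H^1_w$, and one must check that the convolution estimates survive this weak$^\ast$ passage. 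This is precisely the technical content carried out in \cite[Theorems 4.1, 4.2 and Proposition 4.3]{fegr89}.
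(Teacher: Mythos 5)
The paper itself offers no proof of this proposition: it is recalled from Feichtinger--Gr\"ochenig, with the pointer to \cite[Theorems 4.1, 4.2 and Proposition 4.3]{fegr89} --- the very reference you defer to for the technical details. Your overall architecture is indeed the standard one carried out there: covariance $V_\psi\circ\pi(x)=L_x\circ V_\psi$, the reproducing identity $V_\psi f=V_\psi f\ast V_\psi\psi$, the convolution estimate $L^p_m\ast L^1_w\subseteq L^p_m$ guaranteed by \eqref{assumpt-on-w}, the correspondence principle via the idempotent $PF:=F\ast V_\psi\psi$, and duality transported through the (formally self-adjoint, since $\overline{V_\psi\psi(x^{-1})}=V_\psi\psi(x)$) projection $P$ onto the reproducing subspace. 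Items (i)--(vi) are sound in outline, and you correctly identify the weak$^*$ reinterpretation needed at $p=\infty$ as the technical content of \cite{fegr89}.

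There is, however, one genuine gap: your route to the foundational lemma that $V_\phi\psi\in L^1_w(G)$ for all $\phi,\psi\in\mathcal{A}_w(G)$, on which (vii) and the window-independence of all the pairings in (vi) rest. The claimed pointwise bound $|V_\phi\psi|\le|V_\psi\psi|\ast|V_\psi\phi|$ is circular: since $|V_\psi\phi(x)|=|V_\phi\psi(x^{-1})|$, the right-hand side already contains the cross-coefficient whose integrability is to be proven, and what the reproducing identity actually yields is $V_\phi\psi=V_\psi\psi\ast V_\phi\psi$, which is equally circular. Indeed, no one-sided convolution inequality of this shape can close the loop, because any such identity re-introduces a cross term. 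The argument that works uses the orthogonality relations \eqref{ortho-rel} to convolve the two \emph{diagonal} coefficients:
\[
V_\psi\psi\ast V_\phi\phi=\langle T\phi,T\psi\rangle\,V_\phi\psi ,
\]
so that $V_\phi\psi\in L^1_w(G)$ follows from the convolution algebra property of $L^1_w(G)$ whenever $\langle T\phi,T\psi\rangle\neq0$, with a separate perturbation argument in the degenerate case $\langle T\phi,T\psi\rangle=0$. Alternatively, one proves the twisted double-convolution estimate
\[
|V_\phi f(x)|\le\int_G\int_G|V_\psi f(g)|\,|V_\psi\phi(h)|\,|V_\psi\psi(g^{-1}xh)|\,dg\,dh ,
\]
obtained by expanding both $f$ and $\phi$ through the inversion formula with window $\psi$; integrating this against $w$ is where the terms $\beta(x)$ and $\Delta(x^{-1})\beta(x^{-1})$ in \eqref{assumpt-on-w} are genuinely needed. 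Either repair should replace the circular bound; the rest of your sketch then stands.
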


We furthermore need a result on the existence of atomic decompositions for  the  space $\C L_{w}^1(G)$, see 
 \cite[Theorem 4.7]{fegr88}.
\begin{theorem}\label{prop-equ-norms}
Let $\psi \in \mathcal{A}_w(G)$. There exists a discrete
subset $\{g_i\}_{i\in\mathcal{I}}\subset G$ and a collection of linear
functionals  $\lambda_i:\C  L^1_w(G)\rightarrow { \CC},\ i\in\mathcal{I}$ such that
\begin{equation}\label{expansion}
f=\sum_{i\in\mathcal{I}}\lambda_i(f)\pi(g_i)\psi,\mbox{ with }\sum_{i\in\mathcal{I}}|\lambda_i(f)|w(g_i)\asymp \|f\|_{\C L^1_w(G)},
\end{equation}
and the sum converges absolutely in $\C  L^1_w(G)$.
\end{theorem}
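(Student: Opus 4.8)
The plan is to discretize the continuous reproducing formula \eqref{inversion} and turn it into an unconditionally convergent series. Writing $f=V_\psi^\ast V_\psi f=\int_G V_\psi f(g)\,\pi(g)\psi\ud g$, the idea is to replace this vector-valued integral by a Riemann-type sum over a sufficiently fine sampling set $\{g_i\}_{i\in\mathcal{I}}$, so that the atoms of the decomposition are precisely the coherent states $\pi(g_i)\psi$. The coefficients will be local averages of the wavelet transform $V_\psi f$, and the scheme is made rigorous by a perturbation (Neumann series) argument carried out in the coorbit space $\C L^1_w(G)$ itself.

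Concretely, I would first fix a relatively compact neighborhood $U$ of the identity in $G$, choose a relatively separated, $U$-dense family $\{g_i\}_{i\in\mathcal{I}}$, and take a bounded uniform partition of unity $\{\phi_i\}_{i\in\mathcal{I}}$ with $0\le\phi_i\le1$, $\sum_i\phi_i=1$, and $\operatorname{supp}\phi_i\subseteq g_iU$. I then define the candidate functionals and synthesis operator
\[
\lambda_i(f):=\int_G V_\psi f(g)\,\phi_i(g)\ud g, \qquad T_U f:=\sum_{i\in\mathcal{I}}\lambda_i(f)\,\pi(g_i)\psi,
\]
and aim to prove that $T_U$ is a small perturbation of the identity. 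The decisive step is the error estimate: using the correspondence principle of Proposition~\ref{props-of-coorbit}, $V_\psi f=V_\psi f\ast V_\psi\psi$, I would compare $V_\psi(T_U f)$ with $V_\psi f$ and bound their difference in $L^1_w(G)$ by the weighted $L^1$-oscillation $\|\mathrm{osc}_U(V_\psi\psi)\|_{L^1_w}$ of the reproducing kernel over translates of $U$, times $\|f\|_{\C L^1_w(G)}$. Since this oscillation norm tends to $0$ as $U$ shrinks to $\{e\}$, for $U$ small enough one gets $\|I-T_U\|_{\C L^1_w(G)\to \C L^1_w(G)}<1$; a Neumann series then makes $T_U$ invertible, and setting $\Lambda_i:=\lambda_i\circ T_U^{-1}$ yields the expansion \eqref{expansion} with absolute convergence in $\C L^1_w(G)$.

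The norm equivalence then splits into two estimates that follow from the structure already recorded. For synthesis, the triangle inequality together with the bound $\|\pi(g_i)\psi\|_{\C L^1_w(G)}\le Cw(g_i)$ from Proposition~\ref{props-of-coorbit}(ii) gives $\big\|\sum_i c_i\pi(g_i)\psi\big\|_{\C L^1_w(G)}\lesssim\sum_i|c_i|\,w(g_i)$. For analysis, moderateness of $w$ forces $w(g_i)\le C_U\,w(g)$ whenever $g\in g_iU$, so that
\[
\sum_{i\in\mathcal{I}}|\lambda_i(f)|\,w(g_i)\le\sum_{i\in\mathcal{I}}w(g_i)\int_G|V_\psi f(g)|\,\phi_i(g)\ud g\le C_U\int_G|V_\psi f(g)|\,w(g)\ud g=C_U\|f\|_{\C L^1_w(G)} .
\]
Applying the analysis bound to $T_U^{-1}f$ and the synthesis bound to the recovered coefficients $\Lambda_i(f)$ yields the two-sided estimate $\sum_i|\Lambda_i(f)|w(g_i)\asymp\|f\|_{\C L^1_w(G)}$, since $T_U$ is a Banach-space isomorphism.

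I expect the main obstacle to be the oscillation estimate, namely $\lim_{U\to\{e\}}\|\mathrm{osc}_U(V_\psi\psi)\|_{L^1_w}=0$. Mere integrability $V_\psi\psi\in L^1_w(G)$ does not by itself control local fluctuations, so this step typically requires $V_\psi\psi$ to lie in a Wiener amalgam space combining local continuity with global weighted integrability; establishing such regularity of the reproducing kernel (possibly after refining the admissible vector or invoking a density argument) is the genuinely technical part, whereas the discretization, invertibility, and norm-equivalence steps are then routine consequences of Proposition~\ref{props-of-coorbit}.
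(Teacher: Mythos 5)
First, for context: the paper does not prove Theorem~\ref{prop-equ-norms} at all; it imports it from \cite[Theorem 4.7]{fegr88}. So the only meaningful comparison is with the classical Feichtinger--Gr\"ochenig discretization argument, and your plan is exactly that argument: a bounded uniform partition of unity $\{\phi_i\}$ subordinate to translates $g_iU$ of a relatively separated, $U$-dense set, coefficients $\lambda_i(f)=\int_G V_\psi f\,\phi_i$, an operator $T_U$ shown to satisfy $\|I-T_U\|_{\C L^1_w(G)\to\C L^1_w(G)}<1$ via an oscillation estimate on the reproducing kernel $V_\psi\psi$, inversion by Neumann series, and the two coefficient bounds obtained from Proposition~\ref{props-of-coorbit}~(ii) and submultiplicativity of $w$. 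Those parts of your write-up are correct and are the standard route.

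However, the difficulty you flag at the end is not a removable technicality but a genuine gap, and under the hypothesis as stated ($\psi\in\mathcal{A}_w(G)$ only) your argument cannot be closed. Continuity of $V_\psi\psi$ together with $V_\psi\psi\in L^1_w(G)$ does not imply that $\mathrm{osc}_U(V_\psi\psi)$, or the local maximal function $g\mapsto\sup_{h\in gQ}|V_\psi\psi(h)|$, lies in $L^1_w(G)$: continuous integrable functions can have arbitrarily tall, thin spikes, and the reproducing identity $V_\psi\psi=V_\psi\psi\ast V_\psi\psi$ does not bootstrap local regularity, since the bound it yields, $\mathrm{osc}_U(V_\psi\psi)\leq |V_\psi\psi|\ast\mathrm{osc}_U(V_\psi\psi)$, is circular. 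This is precisely why the discretization result quoted later in the paper, Proposition~\ref{Xd-def} from \cite{groe91}, carries the extra hypothesis \eqref{Bw}, and it is under such an amalgam/oscillation condition on the window that the atomic decompositions of \cite{fegr88,fegr89II,groe91} are actually proved. The standard repair is to regularize the window: for $k\in C_c(G)$ the vector $\phi:=\int_G k(x)\pi(x)\psi\,\ud x$ has self-transform $V_\phi\phi(g)=\int_G\int_G k(x)\overline{k(y)}\,V_\psi\psi(x^{-1}gy)\,\ud x\,\ud y$, which does satisfy \eqref{Bw} because convolving with compactly supported continuous functions converts weighted integrability into local sup-integrability; but then the atoms of the expansion are $\pi(g_i)\phi$, not $\pi(g_i)\psi$. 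So to make your proof complete you must either strengthen the hypothesis on $\psi$ to \eqref{Bw} (which is how the cited sources state the theorem, and how the statement here should be read) or insert the window-regularization step and accept the change of atoms; as written, the limit $\|\mathrm{osc}_U(V_\psi\psi)\|_{L^1_w}\to 0$ on which your Neumann-series step rests has no justification.
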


\section{Frames and Coorbit Spaces via Tensor Products}\label{sec:coorb-prod}

Let $G_1,G_2$ be two locally compact groups with unitary square
integrable representations $\pi_1:G_1\rightarrow\mathcal{U}(\H_1)$ 
and $\pi_2:G_2\rightarrow \mathcal{U}(\H_2)$. For  $g:=(g_1,g_2)\in
G:=G_1\times G_2$ the tensor representation  $\pi:G\rightarrow
\mathcal{U}(\H_2\otimes\H_1)$,  
$$
\pi(g):=\pi_2(g_2)\otimes \pi_1(g_1),
$$
acts on a simple tensor $\Psi:=\psi_2\otimes \psi_1\in\H_2\otimes\H_1$ by
\begin{equation}\label{prod-rep-tens}
\pi(g)(\psi_2\otimes\psi_1)=\pi_2(g_2)\psi_2\otimes\pi_1(g_1)\psi_1.
\end{equation}
It   follows  immediately that $\pi$ is a unitary 
 representation of $G$ on $\H_2\otimes\H_1$. Moreover, $\pi$ is
 irreducible (e.g., by \cite[Section 4.4, Theorem 6]{vin89}). Note
 that the order of indices is in agreement with the formulation of the
 kernel theorem in Theorem~\ref{thm-kernel-2}.

If we interpret the simple tensor  $\Psi= \psi _2 \otimes \psi _1$ as
the rank one operator $f\mapsto \psi_1(f) \psi_2$  with $\psi_1\in
\H^\prime_1$, 
then we can write \eqref{prod-rep-tens} as  
$$
\pi(g)(\Psi)(f)=(\pi_1^\prime(g_1)\psi_1)(f)\cdot \pi_2(g_2)\psi_2=\big(\pi_2(g_2)\psi_2\otimes \pi_1^\prime(g_1)\psi_1\big)(f),
$$ 
where the contragredient representation $\pi^\prime_1:G_1\rightarrow
GL(\H^\prime_1)$ of $\pi_1$ is defined as
$(\pi_1^\prime(g_1)\psi_1)(f)=\psi_1(\pi_1(g^{-1}_1)f)$, see
\cite[Section 3.1]{vin89}. 

In case we treat the tensor product as a space of Hilbert-Schmidt operators, $\pi$ acts on $A\in \HS(\H_1,\H_2)$ as
$$
\pi(g)A=\pi_2(g_2)A\pi_1(g_1)^\ast.
$$

The generalized wavelet transform of a simple tensor $f_2\otimes f_1$ with respect to a ``wavelet'' $\Psi=\psi_2\otimes\psi_1$ is given by
\begin{align}\label{wavelet-factors}
V_{\Psi} (f_2\otimes f_1)(g)&=\langle f_2\otimes f_1,(\pi_2(g_2)\otimes\pi_1(g_1))(\psi_2\otimes\psi_1)\rangle \nonumber
\\ &=
\langle f_2,\pi_2(g_2)\psi_2\rangle \overline{\langle f_1,\pi_1(g_1)\psi_2\rangle}\\
&=V_{\psi_2}f_2(g_2)\overline{V_{ \psi_1}f_1(g_1)}.\nonumber
\end{align}
Thus, the wavelet transform of the tensor product representation factors into the product of wavelet transforms on $G_1$ and $G_2$. Strictly speaking, we would have to write $V_{\psi_i}^{\pi_i}f_i$ to indicate the underlying representation, but we omit the reference to the group to keep notation simple.

Throughout this paper we  consider only separable weights
$w:G\rightarrow \R_+$ with $w(g)= (w_1\otimes w_2)(g) = w_1(g_1)
w_2(g_2)$, and $m(g)= (m_1\otimes m_2)(g) = m_1(g_1) m_2(g_2)$,
where $w_i$ is submultiplicative and $m_i$ is $w_i$-moderate.  
Moreover we write $(1/w)(g)=(w_1\otimes w_2)(g)^{-1}$. 
It follows from \eqref{wavelet-factors} that the tensor representation
$\pi _2 \otimes \pi _1$ of two square-integrable representations is
again square-integrable and that the 
tensor $\Psi = \psi_2
\otimes \psi _1$ of two admissible vectors $\psi_2$ and $\psi_1$ is
admissible for $\pi$.  Likewise, if $w=w_1 \otimes w_2$ and  $\psi_1\in\mathcal{A}_{w_1}(G_1)$,
$\psi_2\in\mathcal{A}_{w_2}(G_2)$, then $\psi _2 \otimes \psi _1 \in
\mathcal{A}_{w}(G_1\times G_2)$ (where  we assume that $w_i$, $i=1,2$, satisfies \eqref{assumpt-on-w}).
Therefore all definitions and results of Section~\ref{subsec:coorbit}
hold for the representation  $\pi = \pi _2 \otimes \pi _1$ and $\Psi =
\psi _2 \otimes \psi _1$  . In particular, the orthogonality relation
\eqref{ortho-rel},  the inversion formula \eqref{inversion},
Proposition~\ref{props-of-coorbit} and Theorem~\ref{prop-equ-norms}
hold for suitable admissible vectors $\Psi=\psi_2\otimes \psi_1$.

\section{Kernel Theorems} \label{sec:kernel} 

In this section we derive the general kernel theorems for operators
between coorbit spaces. The basic idea comes from linear algebra,
where a linear operator is identified with its matrix with respect to
a basis. In coorbit theory the basic structure consists of the vectors
$\pi (g)\psi$.  Thus in analogy to linear algebra  we try to describe an operator $A: \H _1
\to \H _2$ by the kernel (= continuous matrix)
\begin{equation}
\label{essential-kern} 
  k_A(g_1,g_2) =  \langle A\pi_1(g_1)\psi_1,\pi_2(g_2)\psi_2\rangle.
\end{equation}
This can be seen as a continuous Galerkin like representation of the operator $A$ \cite{xxl08,xxlgro14}.
The idea goes back to coherent state theory~\cite[Chpt.~1.6]{perelomov86}. One of
its goals is to associate to every operator $A$ a function or symbol $k_A$, and
\eqref{essential-kern} is one of the many possibilities to do so. 

 Assume that $A:\Ci{1}\rightarrow \CiiP{\infty}$, and
$f\in \Ci{1}$, i.e., $A$ maps ``test functions'' to ``distributions''.  
By using  the inversion formula \eqref{inversion} for $f$ and applying
$A$ to it, it follows that formally 
$$
Af=\int_{G_1}\langle f,\pi_1(g_1)\psi_1\rangle A\pi_1(g_1)\psi_1 dg_1,
$$
and furthermore 
\begin{align}
V_{\psi_2} (Af)(g_2)&=\langle Af,\pi_2(g_2)\psi_2\rangle =\int_{G_1}\langle f,\pi_1(g_1)\psi_1\rangle \langle A\pi_1(g_1)\psi_1,\pi_2(g_2)\psi_2\rangle  dg_1
\notag \\
&=\int_G\langle
f,\pi_1(g_1)\psi_1\rangle k_A(g_1,g_2)dg_1 \, . \label{eq:ji1}
\end{align}
Let
\begin{equation}\label{A-tilde}
\mathfrak{ A}F(g_2)=\int_{G_1} F(g_1)k_A(g_1,g_2) dg_1    
\end{equation}
be the integral operator with the kernel $k_A$. Then \eqref{eq:ji1}
can be written as
\begin{equation}
  \label{eq:gha}
V_{\psi_2} Af = \mathfrak{A} V_{\psi_1}f \, , 
\end{equation}
or equivalently,
\begin{equation}
  \label{eq:gh}
A = V_{\psi_2}^* \mathfrak{A} V_{\psi_1} \, .  
\end{equation}

Using this factorization, the computation in \eqref{eq:ji1} can be given
a precise meaning on coorbit spaces.  Identity~\eqref{eq:gh} is the
heart of the kernel theorems. The  combination of  the
properties of the generalized wavelet transform
(Proposition~\ref{props-of-coorbit}) and boundedness properties of
integral operators yields powerful and very general kernel theorems.

We will  first show the existence of a generalized kernel
for operators mapping the space of test functions $\Ci{1}$ into the
distribution space $\CiiP{\infty}$. 
Subsequently, we will 
characterize continuous operators in certain subclasses.

\begin{theorem}\label{thm-kernel-2}
Let $G_1$ and $G_2$ be two locally compact groups, and $(\pi _j,\H
_j)$   
be integrable, unitary, irreducible
representations of $G_j$, 
such that $\mathcal{A}_{w_j}(G_j)
\neq \emptyset $, for  $j=1,2$. 

$(i)$ Every kernel $K \in \C
L^\infty_{1/w} (G_1 \times G_2)$ defines a unique linear operator $A: \Ci{1}\rightarrow
\CiiP{\infty} $ by means of
\begin{equation}
  \label{eq:c2}
\langle A\upsilon,\varphi\rangle 
=\langle K, \varphi\otimes \upsilon \rangle 
\end{equation}
for all  $\upsilon\in \Ci{1}$  and $\varphi\in \Cii{1}$. 
 The operator norm satisfies 
\begin{equation} \label{nequi}
\|A\|_{Op}
\asymp \|K\|_{\C L^{\infty}_{1/w}(G)},
\end{equation}
and 
\begin{equation}\label{Vpsi-of-ess-kernel}
k_A=V_\Psi K.
\end{equation}

$(ii)$ Kernel theorem:  Conversely, if  $A:\Ci{1}\rightarrow
\CiiP{\infty}$ is bounded, then there exists a unique
kernel $K\in \C L^\infty_{1/w} (G_1 \times G_2)$, such
that
 \eqref{eq:c2} holds. 

 \medskip
\end{theorem}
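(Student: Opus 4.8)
The plan is to exploit the factorization $A = V_{\psi_2}^* \mathfrak{A} V_{\psi_1}$ identified in \eqref{eq:gh}, together with the isomorphisms \eqref{eq:f1a}, which identify $\Ci{1}=\H^1_{w_1}$ with $\mathcal{C}o_{\pi_1}L^1_{w_1}(G_1)$ and $\CiiP{\infty}=\H^\infty_{1/w_2}$ with $\mathcal{C}o_{\pi_2}L^\infty_{1/w_2}(G_2)$. The key observation is that a kernel $K$ lives in $\C L^\infty_{1/w}(G_1\times G_2)$ precisely when $V_\Psi K\in L^\infty_{1/w}(G)$, and by \eqref{Vpsi-of-ess-kernel} this transform is exactly $k_A$. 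Thus the whole problem translates, via the two wavelet transforms, into a boundedness question for the integral operator $\mathfrak{A}$ acting from $L^1_{w_1}(G_1)$ to $L^\infty_{1/w_2}(G_2)$ with kernel $k_A$.

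For part $(i)$, I would first use the correspondence principle (Proposition~\ref{props-of-coorbit}\eqref{corr-princ}) to extract $k_A = V_\Psi K$ from $K$, noting that $k_A\in L^\infty_{1/w}(G)$ by hypothesis. The definition of $A$ through the pairing \eqref{eq:c2} should be read as $\langle A\upsilon,\varphi\rangle = \langle K,\varphi\otimes\upsilon\rangle$, and by the duality pairing of Proposition~\ref{props-of-coorbit}\eqref{duality} applied to the tensor representation this equals $\langle V_\Psi K, V_\Psi(\varphi\otimes\upsilon)\rangle_{L^\infty_{1/w},L^1_w}$. Using the factorization \eqref{wavelet-factors}, $V_\Psi(\varphi\otimes\upsilon)(g)=V_{\psi_2}\varphi(g_2)\overline{V_{\psi_1}\upsilon(g_1)}$, so the pairing becomes a double integral against $k_A$, which is exactly $\langle \mathfrak{A}V_{\psi_1}\upsilon, V_{\psi_2}\varphi\rangle$. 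The crux is then an elementary version of Schur's test: an integral operator with kernel in $L^\infty_{1/w}(G_1\times G_2) = L^\infty_{1/w_1\otimes 1/w_2}$ maps $L^1_{w_1}(G_1)$ boundedly into $L^\infty_{1/w_2}(G_2)$, with operator norm controlled by $\|k_A\|_\infty$. Combining this with the isometry and continuity of $V_{\psi_j}$ and $V_{\psi_2}^*$ from Proposition~\ref{props-of-coorbit}\eqref{ana-iso}--\eqref{synth-cont} yields $\|A\|_{Op}\lesssim \|K\|_{\C L^\infty_{1/w}}$; the reverse estimate in \eqref{nequi} follows by testing $A$ on the atoms $\pi_1(g_1)\psi_1$ to recover $k_A$ pointwise, giving $\|K\|_{\C L^\infty_{1/w}}=\|k_A\|_\infty \lesssim \|A\|_{Op}$.

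For part $(ii)$, given a bounded $A$, I would simply define $k_A$ by formula \eqref{essential-kern} and verify that it lies in $L^\infty_{1/w}(G)$: since $\pi_1(g_1)\psi_1$ has $\Ci{1}$-norm comparable to $w_1(g_1)$ and $\pi_2(g_2)\psi_2$ has $\Cii{1}$-norm comparable to $w_2(g_2)$ (by Proposition~\ref{props-of-coorbit}\eqref{ana-iso} applied to the covariance estimate in part \eqref{ana-iso} of that proposition), boundedness of $A$ gives $|k_A(g_1,g_2)|\leq \|A\|_{Op}\,w_1(g_1)w_2(g_2)$, i.e. $k_A\in L^\infty_{1/w}(G)$. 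The candidate kernel is then $K := V_\Psi^* k_A$, which lies in $\C L^\infty_{1/w}(G)$ by the correspondence principle once one checks $k_A = k_A * V_\Psi\Psi$; this convolution identity is the technical heart and should follow from the reproducing property of $V_{\psi_1}$ and $V_{\psi_2}$ applied separately in each variable. Uniqueness of $K$ follows because \eqref{eq:c2} determines the pairing of $K$ against all simple tensors $\varphi\otimes\upsilon$ with $\varphi,\upsilon$ in the respective test spaces, and such tensors are total in $\mathcal{C}o_\pi L^1_w(G)$ by the atomic decomposition of Theorem~\ref{prop-equ-norms} applied to the product representation.

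The main obstacle I anticipate is the rigorous justification of the formal manipulations in \eqref{eq:ji1}, particularly interchanging $A$ with the vector-valued integral from the inversion formula when $A$ is only defined on the test space and maps into distributions. The clean way around this is to avoid the formal computation entirely and instead \emph{define} everything through the dual pairing \eqref{eq:c2}, proving the factorization \eqref{eq:gh} a posteriori as an identity of bounded operators between the relevant coorbit spaces; this reduces all convergence issues to the already-established continuity statements in Proposition~\ref{props-of-coorbit}. A secondary technical point is ensuring the weight assumption \eqref{assumpt-on-w} is correctly invoked so that $L^1_{w_1}$ and $L^\infty_{1/w_2}$ are genuinely translation-invariant and the Schur estimate is translation-compatible, but this is bookkeeping rather than a genuine difficulty.
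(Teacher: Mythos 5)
Your proposal follows essentially the same route as the paper: part (i) is obtained by defining $A$ through the dual pairing \eqref{eq:c2} and estimating via $L^1$--$L^\infty$ duality (your Schur-test phrasing is the same estimate), uniqueness of $K$ comes from the atomic decomposition of Theorem~\ref{prop-equ-norms} applied to the tensor representation, and surjectivity is reduced, via the correspondence principle, to the convolution identity $k_A=k_A\ast V_\Psi\Psi$. Your direct proof of the lower bound in \eqref{nequi} by testing $A$ on the atoms $\pi_1(g_1)\psi_1$ is correct and in fact avoids the inverse mapping theorem that the paper invokes.

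There is, however, a genuine gap at exactly the step you call ``the technical heart.'' The reproducing formula can be applied ``separately in each variable'' only to functions already known to be wavelet transforms of elements of the corresponding coorbit spaces. In the $G_2$-variable this is automatic, since $g_2\mapsto k_A(h_1,g_2)=V_{\psi_2}\big(A\pi_1(h_1)\psi_1\big)(g_2)$ with $A\pi_1(h_1)\psi_1\in\CiiP{\infty}$. But in the $G_1$-variable the function $h_1\mapsto k_A(h_1,g_2)$ has the form ``$A$ applied to a varying atom, paired with a fixed vector,'' which is \emph{not} a priori a wavelet transform of anything --- that is precisely what is being proved. The paper's device is to introduce the weak$^\ast$-continuous adjoint $A^\prime:\Cii{1}\rightarrow\CiP{\infty}$, defined by $\langle A\upsilon,\varphi\rangle=\langle \upsilon,A^\prime\varphi\rangle$, so that $\overline{k_A(h_1,g_2)}=V_{\psi_1}\big(A^\prime\pi_2(g_2)\psi_2\big)(h_1)$ becomes a legitimate wavelet transform of a coorbit distribution; only then can the reproducing formula be invoked in the first variable (after the Fubini justification, which holds because $k_A\cdot V_\Psi(\pi(g)\Psi)\in L^1(G_1\times G_2)$). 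Without this transposition your plan stalls at the first variable. Note also that your fallback strategy --- defining everything through the pairing and establishing \eqref{eq:gh} a posteriori --- only rescues part (i); for part (ii) the convolution identity \emph{is} the proof of surjectivity and cannot be sidestepped. Finally, once $K$ with $V_\Psi K=k_A$ is obtained, you must still verify that the operator defined by $K$ via part (i) coincides with $A$ on all of $\Ci{1}$ (agreement on the atoms $\pi_1(g_1)\psi_1$ plus density from Theorem~\ref{prop-equ-norms}); your totality remark handles uniqueness of $K$, but this identification step should be stated explicitly as well.
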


\proof $(i)$  Fix $K\in \C L^\infty_{1/w}(G)$ with $G=G_1\times G_2$, and let  $\upsilon\in
\Ci{1}$, $\varphi\in \Cii{1}$ be arbitrary. By \eqref{wavelet-factors} it follows
that $\varphi\otimes\upsilon\in \C L^1_{w}(G)$. Therefore, the duality
in \eqref{eq:c2} is well-defined and  
\begin{align}
|\langle K,\varphi\otimes \upsilon\rangle|
&\leq \|K\|_{\C L^\infty_{1/w}(G)}\|\varphi\otimes\upsilon\|_{\C L^1_{w}(G)}
\notag \\
&=\|K\|_{\C
            L^\infty_{1/w}(G)}\|\varphi\|_{\Cii{1}}\|\upsilon\|_{\Ci{1}}. \label{f3}
\end{align}
Therefore, if we fix $\upsilon$, the mapping $\varphi\mapsto \langle
K,\varphi\otimes\upsilon\rangle$ is a bounded, anti-linear
functional on $\Cii{1}$, which we call $A\upsilon\in
\CiiP{\infty}$. The map $\upsilon\mapsto A\upsilon$ is clearly linear,
and \eqref{eq:c2} defines a  linear operator
$A:\Ci{1}\rightarrow\CiiP{\infty}$. The estimate \eqref{f3} implies
that
$$
\|Av\|_{\CiiP{\infty}} \leq 
\|K\|_{\C
  L^\infty_{1/w}(G)} \|v\|_{\Ci{1}} \, ,$$ and thus
$$\|A\|_{\mathrm{Op}} \leq 
\|K\|_{\C
  L^\infty_{1/w}(G)} \, .
$$

$(ii)$ To prove the converse, we need to  show that the mapping
$K\mapsto A$ is one-to-one and onto.  

\emph{Uniqueness}: Let us assume that the kernel $\mathcal{K}\in \C
L^\infty_{1/w}(G)$  also satisfies 
$$
\langle A\upsilon,\varphi\rangle=\langle K,\varphi\otimes \upsilon\rangle=\langle \mathcal{K},\varphi\otimes \upsilon\rangle,
$$
for every $\upsilon\in\Ci{1}$, $\varphi\in\Cii{1}$.
 By Theorem~\ref{prop-equ-norms}, there exists a discrete set $\{\gamma_i\}_{i\in\mathcal{I}}\subset G$ such that
every $F\in\C L_w^1(G)$ can be written as
$$
F=\sum_{i\in\mathcal{I}}\lambda_i(F)\pi( \gamma_i)\big(\psi_2\otimes\psi_1\big),
$$ 
with unconditional  convergence in $\C L_w^1(G)$ and $\sum _i |\lambda 
_i| w(\gamma _i) \leq C \|F\|_{\C  L^1_w(G_1\times G_2)}$. Since $\pi(
\gamma_i)\big(\psi_2\otimes\psi_1\big) = \pi (\gamma _{i,2}) \psi _2
\otimes \pi ( \gamma_{i,1}) \psi _1$, we  conclude that 
\begin{align*}
\langle
  K,F\rangle&=\sum_{i\in\mathcal{I}}\overline{\lambda_i(F)}\langle
              K,\pi (\gamma _{i,2}) \psi _2
\otimes \pi ( \gamma_{i,1} \psi _1) \rangle \\ 
&=\sum_{i\in\mathcal{I}}\overline{\lambda_i(F)}\langle \mathcal{K},\pi (\gamma _{i,2}) \psi _2
\otimes \pi ( \gamma_{i,1} \psi _1) \rangle \\ 
&=\langle \mathcal{K},F\rangle.
\end{align*}
As this equality holds for every $F\in\C L^1_w(G)$, it follows that $K=\mathcal{K}$. 

\emph{Surjectivity}:
Let us assume that $A:\Ci{1}\rightarrow
\CiiP{\infty}$ is bounded, then the kernel  $k_A$  defined in
\eqref{essential-kern}   is an element of $L^\infty_{1/w}(G_1 \times
G_2)$, because  
\begin{align*}
|k_A(g)|&=\left|\langle A \pi_1(g_1)\phi,\pi_2(g_2)\psi\rangle
\right|\\
&\leq \|A\|_{Op}
\, \, \|\pi_1(g_1)\phi\|_{\Ci{1}}\,\, \|\pi_2(g_2)\psi\|_{\Cii{1}}
\\
&\leq \|A\|_{Op}
\,\,  w_1(g_1)\,\,  \|\phi\|_{\Ci{1}}\,\,  w_2(g_2)\,\,  \|\psi\|_{\Cii{1}}.
\end{align*}
We claim that $k_A$ is a generalized wavelet transform. Precisely,
there exists  $K\in \C L^\infty_{1/w}(G_1 \times G_2)$ such that
$k_A=V_{\Psi} K$. To prove this claim, we use
Proposition~\ref{props-of-coorbit}\eqref{corr-princ}, which asserts
that $k_A = V_\psi K$ for some   $K\in \C L^\infty_{1/w}(G)$  if and
only if  $ k_A=k_A\ast V_\Psi\Psi$.


As $k_A\cdot V_\Psi(\pi(g)\Psi)\in L^1(G_1\times G_2)$, we may choose
the most convenient order of integration and apply the reproducing
formula of  Proposition~\ref{props-of-coorbit} \eqref{corr-princ}
consecutively  to the representations $\pi_1$ and $\pi_2$. Using
\eqref{wavelet-factors}  we obtain
\begin{align*}
(k_A\ast V_\Psi\Psi)(g)&=\int_{G}k_A(h)  V_\Psi\Psi (h^{-1}g) \, dh \\
&=\int_{G_1}\int_{G_2} V_{\psi_2}\big(A\pi_1(h_1)\psi_1\big)(h_2) V_{\psi_2}\psi_2(h_2^{-1}g_2) dh_2\ \overline{V_{\psi_1}\psi_1(h_1^{-1}g_1)} dh_1
\\
&=\int_{G_1}\Big(V_{\psi_2}\big(A\pi_1(h_1)\psi_1\big)\ast V_{\psi_2}\psi_2\Big)(g_2) \ \overline{V_{\psi_1}\psi_1(h_1^{-1}g_1)} dh_1
\\
&=\int_{G_1}\langle A \pi_1(h_1)\psi_1,\pi_2(g_2)\psi_2\rangle\
     \overline{V_{\psi_1}\psi_1(h_1^{-1}g_1)} dh_1 = (\ast ) \, .
\end{align*}

At this point we note that by assumption on $A$  there exists a unique
operator $A^\prime:\Cii{1}\rightarrow\CiP{\infty}$ that satisfies
$$
\langle A\upsilon,\varphi\rangle =\langle \upsilon,A^\prime\ \varphi\rangle,
$$
for every $\upsilon\in \Ci{1}$ and $\varphi\in\Cii{1}$. By its definition, $A^\prime$ is weak$^\ast$-continuous.
We continue with the integration over $G_1$ and obtain 
\begin{align*}
  (\ast ) &=\overline{\int_{G_1}\langle A^\prime
            \pi_2(g_2)\psi_2,\pi_1(h_1)\psi_1\rangle\
            V_{\psi_1}\psi_1(h_1^{-1}g_1) dh_1} \\
&=\overline{\Big( V_{\psi_1}\big(A^\prime\pi_2(g_2)\psi_2\big)\ast V_{\psi_1}\psi_1\Big)(g_1)}=\overline{\langle A^\prime \pi_2(g_2)\psi_2,\pi_1(g_1)\psi_1\rangle }
\\
&=\langle A \pi_1(g_1)\psi_1,\pi_2(g_2)\psi_2\rangle=k_A(g).
\end{align*}
By Proposition~\ref{props-of-coorbit}~\eqref{corr-princ} there exists  a kernel $K\in\C
L^\infty_{1/w}(G_1 \times G_2)$, such that $k_A(g_1,g_2) 
$ $= V_\Psi K(g_1,g_2) $. 
By the first part of the proof $K$ defines an  operator $B: \Ci{1}$ $\rightarrow
\CiiP{\infty}$ by means of $\langle Bv, \phi \rangle =\langle K, \phi
\otimes v \rangle $. In particular,
\begin{align*}
\langle B
\pi_1(g_1)\psi _1 ,\pi_2(g_2)\psi _2\rangle &= \langle K, \pi_2(g_2)\psi_2 \otimes
    \pi_1(g_1) \psi _1\rangle = V_\Psi K(g_1,g_2) \\
  &= k_A (g_1,g_2) = \langle A \pi_1(g_1)\psi _1 ,\pi_2(g_2)\psi _2\rangle \, .
\end{align*}

Consequently, $B\pi_1(g_1)\psi _1 = A\pi_1(g_1)\psi _1$ for all
$g_1\in G_1$. This identity extends to all finite linear combinations
of vectors $\pi _1(g_1)\psi _1$ and  by Theorem~\ref{prop-equ-norms} to $\mathcal{C}o
_{\pi _1} L^1_{w_1}(G_1)$. Thus $B=A$ and we have shown that the map
from kernels to operators is onto. 


The map $K \mapsto A$ is bounded and invertible. By the inverse
mapping theorem we obtain that $\|K\| _{ \C
L^\infty_{1/w} (G) } \leq C \|A\|_{\mathrm{Op}}$,
  which proves \eqref{nequi}. 
\hfill $\Box$

\begin{remark} {\rm
It is crucial to interpret the brackets in \eqref{eq:c2} correctly.
For utmost precision, we would have to write
\begin{equation*}
\langle A\upsilon,\varphi\rangle_{\CiiP{\infty},\Cii{1}} 
=\langle K, \varphi\otimes \upsilon \rangle_{\C L^\infty_{1/w }(G),\C
  L^1_{w}(G)} \, ,
\end{equation*}
but we feel that this  notation would  distract from the  analogy to distribution theory.}
\end{remark}

The injectivity of the mapping $K\mapsto A$ from kernels to operators
is closely related to an important property of the coorbit spaces
$\Ci{1}$. This so-called tensor product property has gained
considerable importance in certain special cases~\cite[Theorem
7D]{fei81} and \cite{losert80}, we therefore  state and prove a general
version. Recall that the projective tensor product  of
two Banach spaces $B_1$ and $B_2$ is defined to be
$$
B_1\widehat{\otimes}B_2 = \{f=\sum_{i\in\mathcal{I}} \phi _i \otimes
\psi _i: \phi _i\in B_1, \psi _i \in B_2 \text{ and }
\sum_{i\in\mathcal{I}}\|\phi _i\|_{B_1}\|\psi _i\|_{B_2} 
<\infty  \} \, .
$$
 The norm is given as  $ \|f\|_{\widehat{\otimes}}=\inf \sum_{i\in\mathcal{I}}\|\phi
_i\|_{B_1}\|\psi _i\|_{B_2}$ over all representations of $
f=\sum_{i\in\mathcal{I}}\phi _i\otimes \psi_i$.

 The following identification of the projective tensor product of
 $\Ci{1}$ and $\Cii{1}$ with the coorbit space  $\C L^1_w(G_1\times
 G_2)$ is a generalization of  Feichtinger's original result for
 modulation spaces \cite[Theorem 7D]{fei81}. 
\begin{theorem} Under the general assumptions on the groups $G_i$ and the
representations $(\pi _i,\H_i)$ we have 
$$
\C L^1_w(G_1 \times G_2) =\Cii{1}\widehat{\otimes}\ \Ci{1}.
$$
\end{theorem}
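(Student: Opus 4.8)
The plan is to show that the canonical map
$$
\iota\colon \Cii{1}\,\widehat{\otimes}\,\Ci{1}\longrightarrow \C L^1_w(G_1\times G_2),
$$
determined on simple tensors by sending the abstract tensor $\varphi\otimes\upsilon$ to the concrete tensor $\varphi\otimes\upsilon\in\H_2\otimes\H_1$, is an isomorphism of Banach spaces. First I would verify that $\iota$ is well defined and contractive. Since $w=w_1\otimes w_2$ and by the factorization \eqref{wavelet-factors}, every simple tensor satisfies
\begin{equation*}
\|\varphi\otimes\upsilon\|_{\C L^1_w(G)}=\int_{G_2}|V_{\psi_2}\varphi(g_2)|\,w_2(g_2)\,dg_2\cdot\int_{G_1}|V_{\psi_1}\upsilon(g_1)|\,w_1(g_1)\,dg_1=\|\varphi\|_{\Cii{1}}\,\|\upsilon\|_{\Ci{1}}.
\end{equation*}
Hence the underlying bilinear map $\Cii{1}\times\Ci{1}\to\C L^1_w(G)$ is bounded (indeed isometric on simple tensors), and by the universal property of the projective tensor product it induces $\iota$ with $\|\iota(u)\|_{\C L^1_w(G)}\le\|u\|_{\widehat{\otimes}}$.

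Next I would establish surjectivity together with the reverse norm bound via the atomic decomposition of Theorem~\ref{prop-equ-norms}. Given $f\in\C L^1_w(G)$, expand $f=\sum_{i}\lambda_i(f)\,\pi(\gamma_i)\Psi$; writing $\gamma_i=(\gamma_{i,1},\gamma_{i,2})$ and using $\pi(\gamma_i)\Psi=\pi_2(\gamma_{i,2})\psi_2\otimes\pi_1(\gamma_{i,1})\psi_1$, this is a series of simple tensors. By Proposition~\ref{props-of-coorbit}(ii) the factors obey $\|\pi_2(\gamma_{i,2})\psi_2\|_{\Cii{1}}\le C\,w_2(\gamma_{i,2})$ and $\|\pi_1(\gamma_{i,1})\psi_1\|_{\Ci{1}}\le C\,w_1(\gamma_{i,1})$, so the estimate $\sum_i|\lambda_i(f)|\,w(\gamma_i)\asymp\|f\|_{\C L^1_w(G)}$ shows the series converges absolutely in $\Cii{1}\widehat{\otimes}\,\Ci{1}$ to some $u$ with $\|u\|_{\widehat{\otimes}}\le C\|f\|_{\C L^1_w(G)}$ and $\iota(u)=f$. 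Combined with the previous paragraph, this already identifies $\C L^1_w(G)$, with equivalent norm, with the quotient of $\Cii{1}\widehat{\otimes}\,\Ci{1}$ by $\ker\iota$.

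The main obstacle is therefore the injectivity of $\iota$, which is genuinely delicate: projective tensor products do not respect passage to subspaces, so the natural map of a projective tensor product into a coarser tensor product may have a nontrivial kernel. I would resolve this by factoring through the wavelet transforms. By Proposition~\ref{props-of-coorbit}(i),(iii),(iv), each $V_{\psi_i}$ embeds $\Ci{1}$, respectively $\Cii{1}$, isometrically as the range of the bounded idempotent $V_{\psi_i}V_{\psi_i}^\ast$ on $L^1_{w_i}(G_i)$, i.e.\ as a complemented subspace with inverse $V_{\psi_i}^\ast$ on that range. Using the classical isometry $L^1_{w_2}(G_2)\widehat{\otimes}\,L^1_{w_1}(G_1)=L^1_w(G_1\times G_2)$ (Grothendieck, with the product of the weighted measures), and the fact that the projective tensor product of two isomorphisms onto complemented subspaces is again an isomorphism onto its range — a bounded left inverse being $(V_{\psi_2}^\ast\,V_{\psi_2}V_{\psi_2}^\ast)\widehat{\otimes}(V_{\psi_1}^\ast\,V_{\psi_1}V_{\psi_1}^\ast)$ — the operator $V_{\psi_2}\widehat{\otimes}\,V_{\psi_1}$ is injective.

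Finally I would close the argument by observing that \eqref{wavelet-factors} yields the factorization $V_\Psi\circ\iota=V_{\psi_2}\widehat{\otimes}\,V_{\psi_1}$ on simple tensors, hence everywhere by continuity. Since the right-hand side is injective, the composition $V_\Psi\circ\iota$ is injective, and therefore so is $\iota$. Together with the contractivity and the surjective norm bound above, $\iota$ is a bounded bijection with bounded inverse, which is exactly the asserted identity $\C L^1_w(G_1\times G_2)=\Cii{1}\widehat{\otimes}\,\Ci{1}$ with equivalent norms.
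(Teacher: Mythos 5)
Your proof is correct, and its two quantitative halves coincide with the paper's own argument: your contractivity estimate for $\iota$ (the isometry $\|\varphi\otimes\upsilon\|_{\C L^1_w(G)}=\|\varphi\|_{\Cii{1}}\|\upsilon\|_{\Ci{1}}$ via \eqref{wavelet-factors} and Fubini) is exactly the paper's inclusion $\Cii{1}\widehat{\otimes}\,\Ci{1}\subseteq \C L^1_w(G)$, and your surjectivity step via the atomic decomposition of Theorem~\ref{prop-equ-norms} together with Proposition~\ref{props-of-coorbit}~$(ii)$ is exactly the paper's inclusion $\C L^1_w(G)\subseteq\Cii{1}\widehat{\otimes}\,\Ci{1}$. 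Where you genuinely depart from the paper is the injectivity of the canonical map $\iota$. The paper never raises this point: it defines $B_1\widehat{\otimes}B_2$ concretely as the set of convergent sums $\sum_{i\in\I}\phi_i\otimes\psi_i$ with $\sum_{i\in\I}\|\phi_i\|_{B_1}\|\psi_i\|_{B_2}<\infty$, i.e.\ it tacitly works with the \emph{image} of the abstract projective tensor product inside the distribution space $(\H^1_w)^\sim$, for which injectivity is vacuous, and then the two inclusions (whose explicit constants already give norm equivalence, with the inverse mapping theorem as a fallback) finish the proof. You instead prove the stronger statement that the abstract canonical map is a Banach space isomorphism, and you are right that this does not follow formally from the paper's two steps: they only exhibit a bounded surjection $\iota$ with a bounded right inverse, which is compatible with $\ker\iota\neq\{0\}$. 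Your resolution — factor $V_\Psi\circ\iota$ as Grothendieck's isometry $L^1_{w_2}(G_2)\widehat{\otimes}\,L^1_{w_1}(G_1)=L^1_w(G_1\times G_2)$ composed with $V_{\psi_2}\widehat{\otimes}\,V_{\psi_1}$, and observe that the latter has the bounded left inverse $V_{\psi_2}^\ast\widehat{\otimes}\,V_{\psi_1}^\ast$ (your more elaborate formula reduces to this, since $V_{\psi_i}^\ast V_{\psi_i}=I$ by Proposition~\ref{props-of-coorbit}) — is correct, and it is precisely the complementation of the ranges $V_{\psi_i}V_{\psi_i}^\ast$ that makes the tensor-product-of-subspaces pathology harmless here. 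In short: your argument buys the sharper, representation-independent form of the theorem at the cost of two external ingredients (functoriality of $\widehat{\otimes}$ and Grothendieck's $L^1\widehat{\otimes}L^1=L^1$), while the paper's formulation sidesteps injectivity by fiat through its concrete definition of the tensor product.
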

\begin{proof}
Let $F\in\C L^1_w(G)$. Then by Theorem~\ref{prop-equ-norms} applied to $\pi = \pi _2
\otimes \pi _1$, $F$ possesses the representation
 $F=\sum_{i\in\I} \lambda_i(F)\pi(\gamma_i)\Psi\in\C L^1_w(G) $ with $\gamma
 _i = (\gamma _{i,1} , \gamma _{i,2}) \in G_1\times G_2$ and
$\sum_{i\in\I} |\lambda _i| w(\gamma _i) \leq C
\|F\|_{\C L^1_w(G)}$. Using Proposition~\ref{props-of-coorbit}~$(ii)$ we obtain that 
\begin{align*}
  \sum_{i\in\I} \| & \lambda_i(F) \pi_1(\gamma_{i,1})\psi_1
  \|_{\Ci{1}}\|\pi_2(\gamma_{i,2})\psi_2\|_{\Cii{1}}  \\
 &\leq \sum_{i\in\I} |\lambda_i(F)|w_1(\gamma _{i,1}) w_2(\gamma _{i,2}) \|\psi_1
 \|_{\Ci{1}}\|\psi_2\|_{\Cii{1}} 
 \leq C\|F\|_{\C L^1_w(G)}.
\end{align*}
Thus $F\in  \Cii{1}\widehat{\otimes}\ \Ci{1}$,  and  $\C L^1_w(G)
$ is continuously embedded into $\Cii{1}\widehat{\otimes}\ \Ci{1}$.  

Conversely, let $F \in \Cii{1}\widehat{\otimes}\ \Ci{1}$.  Choose a
representation $F = \sum \limits_{i\in\mathcal{I}} f_{i,2} \otimes
f_{i,1}$ with $\sum \limits_{i\in\mathcal{I}} \|
f_{i,1} \|_{\Ci{1}} \|f_{i,2}\|_{\Cii{1}} < \infty$. Using Fubini's theorem and
Proposition~\ref{props-of-coorbit}~$(ii)$  yields
\begin{align*} 
  \|F\|_{\C L^1_w(G)}&=
  \int_G \left| V_{\Psi} F (g) \right| w(g) dg  \\
& \leq \sum \limits_{i\in\mathcal{I}} \left( \int_{G_1} \left| V_{\psi_1} f_{i,1} (g_1) \right| w_1(g_1) dg_1 \right) \cdot \left( \int_{G_2} \left| V_{\psi_2} f_{i,2} (g_2) \right| w_2(g_2) dg_2 \right)  
\\ &= \sum \limits_{i\in\mathcal{I}} \|f_{i,1}\|_{\Ci{1}} \| f_{i,2} \|_{\Cii{1}} < \infty.
\end{align*}
Thus,  $ \Cii{1}\widehat{\otimes}\ \Ci{1}\subseteq \C L^1_w(G)$. The
equivalence of the norms follows from the inverse mapping theorem. 
\end{proof}

Once the kernel theorem provides a general  description  of operators
between test functions and distributions, we may
try to characterize certain classes of operators by properties of their
kernel. Since on  the level of the generalized wavelet transform such
operators correspond to integral operators (see diagram), we may translate the
various versions of Schur's test  to kernel theorems for operators
between coorbit spaces. Following  the procedure in \cite[Theorem
3.3]{coni17}, we first formulate a general version of Schur's test and
then derive abstract kernel theorem.

We  introduce two classes of mixed norm spaces. For two
$\sigma$-finite measure spaces $(X,\mu)$ and $(Y,\nu)$, $1\leq
p\leq\infty$, and $ m:X\times Y\rightarrow \R_+,$   we define the
spaces $L^{p,\infty}_m(X\times Y)$, and
$\mathcal{L}^{p,\infty}_m(X\times Y)$,   by the norms 
\begin{equation}\label{eq:mixed-1}
\|F\|_{L^{p,\infty}_m(X\times Y)}:=\esssup_{y\in Y}\left(\int_X |F(x,y)|^pm(x,y)^p d\mu(x)\right)^{1/p},
\end{equation}
and
\begin{equation}\label{eq:mixed-2}
\|F\|_{\mathcal{L}^{p,\infty}_m(X\times Y)}:=\esssup_{x\in X}\left(\int_Y |F(x,y)|^pm(x,y)^p d\nu(y)\right)^{1/p}.
\end{equation}

The following version of Schur's test is folklore and  can be found in
\cite[Proposition 5.2 and 5.4]{taonotes} or \cite{mad80}.




\begin{proposition}\label{tao-prop-1}
Let $(X,\mu)$ and $(Y,\nu)$ be $\sigma$-finite measure spaces,  $1\leq p\leq\infty$,
$\frac{1}{p}+\frac{1}{q}=1$, and   let $T$ be the integral operator $
Tf(y)=\int_X f(x)k_T(x,y)d\mu(x) $ with kernel $k_T:X\times
Y\rightarrow \mathbb{C}$. 

$(i)$ The operator $T$ is bounded from  $L^1_{m_1}(X)$ to 
$L^p_{m_2}(Y)$, if and only if $k_T \in \mathcal{L}^{p,\infty}_{m_1^{-1}\otimes
    m_2}(X\times Y)$. 
In that case  
\begin{equation}
\|T\|_{L^1_{m_1}(X)\rightarrow L^p_{m_2}(Y)} 
=\|k_T\|_{\mathcal{L}^{p,\infty}_{m_1^{-1}\otimes
    m_2}(X\times Y)} \, .
\end{equation}

$(ii)$ The  operator $T$ is bounded from  $L^p_{m_1}(X)$ to
$L^\infty_{m_2}(Y)$, if and only if $k_T \in L^{q,\infty}_{m_1^{-1}\otimes
    m_2}(X\times Y)$. 
In this  case 
\begin{equation}
\|T\|_{L^p_{m_1}(X)\rightarrow L^\infty_{m_2}(Y)} 
=\|k_T\|_{L^{q,\infty}_{m_1^{-1}\otimes
    m_2}(X\times Y)}. 
\end{equation}
\end{proposition}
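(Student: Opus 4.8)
The plan is to reduce the weighted statement to an unweighted Schur test, and then to prove the two bounds making up each ``if and only if'' separately, so that the norm identities come out with sharp constants.

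\textbf{Reduction to the unweighted case.} First I would absorb the weights into the kernel. Writing $\tilde{k}(x,y) := m_1(x)^{-1}m_2(y)\,k_T(x,y) = (m_1^{-1}\otimes m_2)(x,y)\,k_T(x,y)$ and substituting $F(x) = f(x)m_1(x)$, $G(y) = m_2(y)\,Tf(y)$, one checks directly that $G(y) = \int_X F(x)\tilde{k}(x,y)\,d\mu(x)$, that $\|f\|_{L^1_{m_1}} = \|F\|_{L^1}$, and that $\|Tf\|_{L^p_{m_2}} = \|G\|_{L^p}$ (and likewise for the exponents of part $(ii)$). Hence $T$ is bounded between the weighted spaces with exactly the same norm as the unweighted integral operator with kernel $\tilde{k}$, while $\|\tilde{k}\|_{\mathcal{L}^{p,\infty}} = \|k_T\|_{\mathcal{L}^{p,\infty}_{m_1^{-1}\otimes m_2}}$ (resp. $\|\tilde{k}\|_{L^{q,\infty}} = \|k_T\|_{L^{q,\infty}_{m_1^{-1}\otimes m_2}}$). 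This is routine and reduces everything to $m_1 = m_2 \equiv 1$.

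\textbf{Part $(ii)$.} In the unweighted version the relevant norm is $\esssup_{y}\|k_T(\cdot,y)\|_{L^q(X)}$. Sufficiency is immediate from H\"older's inequality in the $x$-variable for fixed $y$: $|Tf(y)| \le \|f\|_{L^p(X)}\,\|k_T(\cdot,y)\|_{L^q(X)}$, whence $\|Tf\|_{L^\infty(Y)} \le \|f\|_{L^p}\,\esssup_y\|k_T(\cdot,y)\|_{L^q}$. For necessity I would exploit the converse of H\"older: for a.e. fixed $y$ one has $\|k_T(\cdot,y)\|_{L^q(X)} = \sup\{|\int_X f\,k_T(\cdot,y)\,d\mu| : \|f\|_{L^p}\le 1\}$, while $|Tf(y)|\le\|T\|$ for a.e. $y$ whenever $\|f\|_p\le 1$. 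The task is to swap these two quantifiers. Using $\sigma$-finiteness, hence separability of $L^p(X)$ for $p<\infty$, I would pick a countable dense family $\{f_n\}$ in the unit ball of $L^p(X)$, discard the countable union of null sets on which some $|Tf_n(y)|>\|T\|$, and conclude $\esssup_y\|k_T(\cdot,y)\|_{L^q}=\esssup_y\sup_n|Tf_n(y)|\le\|T\|$. Combining the two inequalities yields the claimed identity of norms.

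\textbf{Part $(i)$ and main obstacle.} The cleanest route here is duality: the adjoint $T^\ast:L^q(Y)\to L^\infty(X)$ is the integral operator with kernel $(y,x)\mapsto\overline{k_T(x,y)}$ and satisfies $\|T^\ast\|=\|T\|$, so applying part $(ii)$ to $T^\ast$ (with the roles of $X,Y$ and of $p,q$ interchanged) shows that $T:L^1(X)\to L^p(Y)$ is bounded iff $\esssup_x\|k_T(x,\cdot)\|_{L^p(Y)}=\|k_T\|_{\mathcal{L}^{p,\infty}}<\infty$, with equal norms; for sufficiency one can instead argue directly, since Minkowski's integral inequality applied to $Tf=\int_X f(x)k_T(x,\cdot)\,d\mu(x)$ gives $\|Tf\|_{L^p}\le\|f\|_{L^1}\,\esssup_x\|k_T(x,\cdot)\|_{L^p}$. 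The genuinely delicate point throughout is the necessity (sharp-constant) direction, i.e. the quantifier swap in part $(ii)$, which is exactly where $\sigma$-finiteness and separability enter; the endpoints $p\in\{1,\infty\}$ (where $L^p$ or $L^q$ fails to be separable) need a small extra approximation argument. This is also why I favor the duality argument for part $(i)$: a direct necessity proof would require differentiating $\mu(E)^{-1}\int_E k_T(\cdot,y)\,d\mu$ as $E$ shrinks to a point, a Lebesgue-differentiation step unavailable on a general $\sigma$-finite space.
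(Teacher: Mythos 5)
The paper offers no proof of Proposition~\ref{tao-prop-1} at all --- it is cited as folklore from \cite{taonotes} and \cite{mad80} --- so the comparison is with the standard argument, and your skeleton (absorbing the weights into the kernel, H\"older/Minkowski for sufficiency, a countable-family quantifier swap for the sharp necessity direction) is indeed that argument. However, your execution of the necessity step, which you rightly single out as the crux, rests on a false lemma: $\sigma$-finiteness does \emph{not} imply separability of $L^p(X)$. For example, $X=\{0,1\}^{\mathbb{R}}$ with the product probability measure is finite (hence $\sigma$-finite), yet $L^p(X)$ is non-separable for every $p<\infty$; worse, in this example no countable subset of the unit ball of $L^p(X)$ is even norming, since any countable family depends on only countably many coordinates, and a Rademacher function in a fresh coordinate pairs to zero against all of them. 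So the countable dense family your argument requires may simply fail to exist. The gap is repairable, but only via an ingredient absent from your sketch: $k_T$ is \emph{jointly} measurable, hence measurable with respect to a countably generated sub-$\sigma$-algebra $\mathcal{A}_0\otimes\mathcal{B}_0$ of the product; all sections $k_T(\cdot,y)$ then lie in $L^q(X,\mathcal{A}_0,\mu)$, which \emph{is} separable, and the quantifier swap can be run inside that subspace (the mixed norms and the operator norm are unaffected by this reduction).

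Two further points need attention. First, even granting a countable dense family $\{f_n\}$ in the unit ball, the identity $\|k_T(\cdot,y)\|_{L^q}=\sup_n|Tf_n(y)|$ is only valid when $k_T(\cdot,y)\in L^q(X)$: if $\|k_T(\cdot,y)\|_{L^q}=\infty$, the functional $f\mapsto\int_X f\,k_T(\cdot,y)\,d\mu$ is unbounded and densely defined, its kernel meets the unit ball in a dense set, and one can therefore have a dense family with $\sup_n|Tf_n(y)|$ finite or even zero. Thus your argument as written does not exclude sections of infinite norm --- precisely what must be excluded; the standard fix is the converse H\"older inequality in its Tonelli form over simple functions, combined with truncation and monotone convergence (equivalently, a reduction to the positive kernel $|k_T|$). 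Second, your duality route for part $(i)$ silently identifies the abstract Banach-space adjoint $T^\ast$ with the integral operator whose kernel is $(y,x)\mapsto\overline{k_T(x,y)}$; this identification needs Fubini, i.e. $\iint|f(x)\,k_T(x,y)\,g(y)|\,d\mu\,d\nu<\infty$, which is not known a priori from boundedness of $T$ --- it essentially amounts to boundedness of the operator with kernel $|k_T|$, i.e. to the conclusion being proved. This circularity has to be broken, for instance by first showing that $\|T\|$ also bounds the operator with kernel $|k_T|$ (a sign-pattern approximation by functions of the form $\sum_j\chi_{A_j}(x)v_j(y)$, via martingale convergence, does this), after which every interchange of integrals is justified by Tonelli and both parts follow cleanly.
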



 We now  characterize the boundedness of  operators between certain
 coorbit spaces. 
\begin{theorem}\label{thm-kernel-char1}
Let $1\leq p,q \leq \infty $ with $\frac{1}{p}+\frac{1}{q}=1$, and
$m_j$ be $w_j$-moderate weights on $G_j$. If $A$ is a bounded
operator from $\Ci{1} $ to $
\CiiP{\infty}$ with kernel $K$, then the following holds:

$(i)$  $A$ is bounded from $\Cim{1}$ to $ \Ciim{p}$, if and only if its   kernel $K$  is in $\C
\mathcal{L}^{p,\infty}_{m_1 ^{-1}\otimes m_2}(G_1\times G_2)$. 
Its  operator norm satisfies 
$$
\|A\|_{Op} \asymp \|K\|_{\C \mathcal{L}^{p,\infty}_{m_1^{-1}\otimes m_2}(G)} \, .
$$
$(ii)$  $A$ is bounded from $\Cim{p}$ to $ \Ciim{\infty}$, if and only
if  its kernel $K$ is in $\C L^{q,\infty}_{m_1^{-1}\times
  m_2}(G_1\times G_2)$. Its operator norm satisfies
$$
\|A\|_{Op}\asymp \|K\|_{\C L^{q,\infty}_{m_1^{-1}\otimes m_2}(G)}\, .
$$
\end{theorem}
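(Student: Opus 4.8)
The plan is to reduce both statements to Schur's test by passing to the transform side through the factorization identities already at hand. Recall from \eqref{eq:gh} the factorization $A = V_{\psi_2}^* \mathfrak{A} V_{\psi_1}$, where $\mathfrak{A}$ is the integral operator \eqref{A-tilde} with kernel $k_A = V_\Psi K$ (see \eqref{Vpsi-of-ess-kernel}); this uses \eqref{eq:gha} together with $V_{\psi_2}^* V_{\psi_2} = I$ from Proposition~\ref{props-of-coorbit}~$(iv)$. I would first record the companion identity $\mathfrak{A} = V_{\psi_2} A V_{\psi_1}^*$: for $F$ in the relevant Lebesgue space, the weak-integral definition of $V_{\psi_1}^*$ and the continuity of $A$ give, via \eqref{essential-kern}, $V_{\psi_2}(A V_{\psi_1}^* F)(g_2) = \int_{G_1} F(g_1)\langle A\pi_1(g_1)\psi_1,\pi_2(g_2)\psi_2\rangle\, dg_1 = \mathfrak{A}F(g_2)$. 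These two identities are the crux of the reduction.

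For part $(i)$, Proposition~\ref{props-of-coorbit}\eqref{ana-iso} makes $V_{\psi_1}\colon \Cim{1}\to L^1_{m_1}(G_1)$ and $V_{\psi_2}\colon \Ciim{p}\to L^p_{m_2}(G_2)$ isometries, while Proposition~\ref{props-of-coorbit}\eqref{synth-cont} makes the synthesis maps $V_{\psi_1}^*,V_{\psi_2}^*$ bounded. Sandwiching $A = V_{\psi_2}^*\mathfrak{A}V_{\psi_1}$ and $\mathfrak{A} = V_{\psi_2}AV_{\psi_1}^*$ between these bounded maps yields the norm equivalence $\|A\|_{\Cim{1}\to\Ciim{p}} \asymp \|\mathfrak{A}\|_{L^1_{m_1}(G_1)\to L^p_{m_2}(G_2)}$; since $\Ci{1}$ is dense in $\Cim{1}$ it suffices to check the factorizations on the test-function atoms $\pi_1(g_1)\psi_1$, where they hold by construction. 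I would then invoke Schur's test in the form Proposition~\ref{tao-prop-1}$(i)$, which identifies $\|\mathfrak{A}\|_{L^1_{m_1}\to L^p_{m_2}}$ with $\|k_A\|_{\mathcal{L}^{p,\infty}_{m_1^{-1}\otimes m_2}(G_1\times G_2)}$. Finally, $k_A = V_\Psi K$ and the definition of the coorbit norm (again Proposition~\ref{props-of-coorbit}\eqref{ana-iso}, now with $\mathcal{L}^{p,\infty}_{m_1^{-1}\otimes m_2}$ in place of $L^p_m$) give $\|k_A\|_{\mathcal{L}^{p,\infty}_{m_1^{-1}\otimes m_2}} = \|K\|_{\C\mathcal{L}^{p,\infty}_{m_1^{-1}\otimes m_2}(G)}$. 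Chaining the three identities produces the asserted equivalence, and boundedness of $A$ becomes equivalent to finiteness of the last norm, i.e.\ to $K\in\C\mathcal{L}^{p,\infty}_{m_1^{-1}\otimes m_2}(G_1\times G_2)$.

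Part $(ii)$ follows the identical scheme with $\Cim{1},\Ciim{p},L^1_{m_1},L^p_{m_2}$ replaced by $\Cim{p},\Ciim{\infty},L^p_{m_1},L^\infty_{m_2}$ and Proposition~\ref{tao-prop-1}$(i)$ replaced by Proposition~\ref{tao-prop-1}$(ii)$, which delivers the $L^{q,\infty}_{m_1^{-1}\otimes m_2}$ norm of $k_A$ and hence the coorbit space $\C L^{q,\infty}_{m_1^{-1}\otimes m_2}(G)$. The step I expect to require the most care is the companion factorization $\mathfrak{A}=V_{\psi_2}AV_{\psi_1}^*$ together with the assertion that the integral operator seen on the \emph{entire} Lebesgue space $L^1_{m_1}(G_1)$ (resp.\ $L^p_{m_1}(G_1)$) is genuinely controlled by $A$, which a priori acts only on the reproducing subspace $\ran V_{\psi_1}$; it is precisely the boundedness of the synthesis operator $V_{\psi_1}^*$ that supplies this control and makes the \emph{converse} of Schur's test transfer without loss. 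One should also confirm that the mixed-norm spaces $\mathcal{L}^{p,\infty}$ and $L^{q,\infty}$ are admissible solid function spaces on $G_1\times G_2$, so that the associated coorbit spaces and the isometry of Proposition~\ref{props-of-coorbit}\eqref{ana-iso} are available; this holds under the standing assumptions on $w_1,w_2$.
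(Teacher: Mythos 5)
Your proposal is correct and follows the paper's master plan: conjugate $A$ by the wavelet transforms so that it becomes the integral operator $\mathfrak{A}$ with kernel $k_A=V_\Psi K$, then invoke Schur's test (Proposition~\ref{tao-prop-1}); your sufficiency half is identical to the paper's. Where you genuinely diverge is the necessity half. The paper never writes down the companion factorization $\mathfrak{A}=V_{\psi_2}AV_{\psi_1}^*$ and never uses the converse half of Schur's test; instead it computes the mixed norm of $k_A$ directly by freezing $g_1$: since $k_A(g_1,\cdot)=V_{\psi_2}\big(A\pi_1(g_1)\psi_1\big)$, one has $\|k_A\|_{\mathcal{L}^{p,\infty}_{m_1^{-1}\otimes m_2}}=\sup_{g_1}\|A\pi_1(g_1)\psi_1\|_{\Ciim{p}}\,m_1(g_1)^{-1}\leq \|A\|_{Op}\,\sup_{g_1}\|\pi_1(g_1)\psi_1\|_{\Cim{1}}\,m_1(g_1)^{-1}\leq \|A\|_{Op}\,\|V_{\psi_1}\psi_1\|_{L^1_{w_1}(G_1)}$, the last step using $w_1$-moderateness of $m_1$. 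This tests $A$ only on the atoms $\pi_1(g_1)\psi_1$, which are honest elements of $\Cim{1}$, so no interchange of $A$ with a weak vector-valued integral is ever required --- precisely the step you yourself flag as the most delicate in your argument. Your route is more symmetric and yields the clean operator-level equivalence $\|A\|_{Op}\asymp\|\mathfrak{A}\|_{Op}$, with the kernel condition then delegated entirely to the two-sided Proposition~\ref{tao-prop-1}; but it carries the burden of justifying $\mathfrak{A}=V_{\psi_2}AV_{\psi_1}^*$ on the whole Lebesgue space. In part $(i)$ this is unproblematic, because the synthesis integral converges absolutely in $\Cim{1}$ (indeed $\|\pi_1(g_1)\psi_1\|_{\Cim{1}}\leq m_1(g_1)\|V_{\psi_1}\psi_1\|_{L^1_{w_1}}$), so a bounded $A$ passes through it. In part $(ii)$ with $p=\infty$, however, $V_{\psi_1}^*F$ converges only in a weak-$*$ sense, and pulling a merely norm-bounded $A$ through it needs an additional argument (e.g., establish the identity for compactly supported $F$, then exhaust $G_1$ inside the converse Schur estimate); your sketch leaves this implicit. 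In short: correct, same sufficiency, genuinely different necessity --- the paper's atom-testing computation is in effect an inlined, coorbit-adapted proof of the converse Schur test that sidesteps the factorization issue your approach must confront.
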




\proof 
Since $\mathcal{C}o_{\pi_1}L^{1}_{w_1}(G_1)\,\subseteq
\mathcal{C}o_{\pi_1}L^{1}_{m_1}(G_1)\,$ and  $\mathcal{C}o_{\pi_2}L^{p}_{m_2}(G_2)\,\subseteq
\mathcal{C}o_{\pi_2}L^{\infty}_{1/w_2}(G_2)\,$ by \eqref{eq:f1}, the
kernel theorem is applicable to the operator $A$,  and  there exists a 
kernel $K\in  \C
L^\infty_{1/w} (G_1\otimes G_2)$, such that $V_\Psi K (g_1,g_2) = k_A(g_1,g_2) =  \langle A \pi _1(g_1)\psi
_1, \pi _2(g_2)\psi _2 \rangle$.  

 Assume first  that $K\in \C \mathcal{L}^{p,\infty}_{m_1^{-1}\otimes
  m_2}(G)$, which means that $V_\Psi K\in
\mathcal{L}^{p,\infty}_{1/m_1\otimes m_2}(G)$.
By Proposition~\ref{tao-prop-1}, the integral operator  $\mathfrak{A}$
defined by the integral kernel $k_A$ is   bounded from
$L^1_{m_1}(G_1)$ to $L^p_{m_2}(G_2)$. According to \eqref{eq:gh},  $A$
factors as $A = V_{\psi _2}^* \mathfrak{A} V_{\psi _1}$,  where  $V_{\psi
  _1}$ is an isometry from $\mathcal{C}o _{\pi _1}L^1_{m_1}(G_1)$ to
$L^1_{m_1}(G_1)$, and $V_{\psi _2}^*$ is bounded from $L^p_{m_2}(G_2)$
to $\mathcal{C}o _{\pi _2 } L^p_{m_2}(G_2)$ by
Proposition~\ref{props-of-coorbit}.  Consequently
$A$ is bounded from $\Cim{1}$ to $ \Ciim{p}$. The boundedness estimate
follows from
\begin{align*}
\|A\|_{Op} &\leq \|V_{\psi _2}^*\|_{Op} \, \| \mathfrak{A}\|_{L^1_{m_1}(G_1)\rightarrow
    L_{m_2}^p(G_2)} \, \|V_{\psi _1}\|_{Op}\\ &\leq C \|k_A\|_{\mathcal{L}^{p,\infty}_{m_1^{-1}\otimes
    m_2}(G)} = C \|K\|_{\C\mathcal{L}^{p,\infty}_{m_1^{-1}\otimes
                 m_2}(G)}. 
\end{align*}


Conversely, let $A$ be bounded from  $\Cim{1}$ to $ \Ciim{p}$. Then $A
\pi _1(g_1)\psi _1$ $ \in \mathcal{C}o _{\pi _2} L^p_{m_2}(G_2)$ and the
following estimates make sense: 
\begin{align*}
\|K\|_{\C \mathcal{L}_{m_1^{-1}\otimes m_2}^{p,\infty}(G)}&= \|V_\Psi
    K\| _{\mathcal{L}_{m_1^{-1}\otimes m_2}^{p,\infty}(G)} = \|k_A\|_{
\mathcal{L}_{m_1^{-1}\otimes m_2}^{p,\infty}(G)}\\
 &=\sup_{g_1\in G_1}\left(\int_{G_2}|\langle A\pi_1(g_1)\psi_1,\pi_2(g_2)\psi_2\rangle
 m_2(g_2)|^p dg_2\right)^{1/p}   m_1(g_1)^{-1} \\
&=\sup_{g_1\in G_1}\|A\pi_1(g_1)\psi_1\|_{\Ciim{p}}  m_1(g_1)^{-1}.\\ 
&\leq \|A\|_{Op}\sup_{g_1\in G_1}\|\pi_1(g_1)\psi_1\|_{\Cim{1}}  m_1(g_1)^{-1}
\end{align*}
Since $V_{\psi _1} \psi _1 \in L^1_{w_1}(G_1)$ and  $m_1$ is $w_1$-moderate and thus satisfies $m_1(g_1h)
m_1(g_1)^{-1} \leq w_1(h)$, the last expression is bounded
by 
\begin{align*}
  \sup_{g_1\in G_1} \|\pi_1(g_1)&\psi_1\|_{\Cim{1}}\cdot m_1(g_1)^{-1}
  =\sup_{g_1\in G_1}\int_{G_1}|\langle
    \psi_1,\pi_1(g_1^{-1}h)\psi_1 \rangle| \frac{m_1(h)}{ m_1(g_1)}dh \\ 
 &\leq \sup_{g_1\in G_1}\int_{G_1}|\langle \psi_1,\pi_1(h)\psi_1
   \rangle \frac{m_1(g_1h)}{ m_1(g_1)}dh= \|V_{\psi_1}\psi_1\|_{L^1_{w_1}(G_1)}.
\end{align*}
Thus $K\in \C \mathcal{L}_{m_1^{-1}\otimes m_2}^{p,\infty}(G)$. 

Part $(ii)$ follows by using 
Proposition~\ref{tao-prop-1}~$(ii)$ instead of $(i)$ and is proved
similarly. 
\pbox

The following diagram shows the connection between the different operators and spaces.

\begin{center}
\begin{tikzpicture}[scale=1.2]
\draw[->,thick] (-0.2,1.8)--(-0.2,-0.6)node[pos=0.5,left]{$V_{\psi_1}$};
\draw[->,thick] (1.3,-1.1)--(4.7,-1.1)node[pos=0.48,above]{$k_A\ \in\ \mathcal{L}^{p,\infty}_{m_1^{-1}\otimes m_2}(G)$};
\draw[->,thick] (1.3,2.3)--(4.7,2.3)node[pos=0.3,below]{$A$ bounded};
\draw[->,thick] (1.3,-1.1)--(4.7,-1.1)node[pos=0.3,below]{$\mathfrak{A}$ bounded};
\draw[->,thick] (6.2,1.8)--(6.2,-0.6)node[pos=0.5,right]{$V_{\psi_2}$};
\draw[->,thick] (1.66,0.25)--(1.66,-0.4)node[pos=0.5,right]{$V_{\Psi}$};
\draw[<->,thick] (1.66,1.7)--(1.66,0.9);
\draw (0,-1.1) node {$L^1_{m_1}(G_1)$};
\draw (0,2.3) node {$\Cim{1}$};
\draw (6,2.3) node {$\Ciim{p}$};
\draw (6,-1.1) node {$L^p_{m_2}(G_2)$};
\draw (3.21,0.5) node {$K\ \in\ \C \mathcal{L}^{p,\infty}_{m_1^{-1}\otimes m_2}(G)$};
\end{tikzpicture}
\end{center}


Using interpolation between $L^p$-spaces, Schur's test can also be
formulated as saying that an integral operator is bounded on all $L^p$
simultaneously, if and only if its kernel belongs to $L^{1,\infty }
\cap \mathcal{L}^{1,\infty}$. The corresponding version for coorbit
spaces is a consequence of  Theorem~\ref{thm-kernel-char1} and  an interpolation argument.
\begin{corollary}
The following conditions are equivalent:
\begin{enumerate}[(i)]
\item $A:\Cim{p}\rightarrow \Ciim{p}$ is bounded for every $1\leq p\leq\infty$.
\item Both  $A:\Cim{1}\rightarrow \Ciim{1}$ and $A:\Cim{\infty}\rightarrow \Ciim{\infty}$ are bounded.
\item $K \in \C \mathcal{L}^{1,\infty}_{ m_1^{-1}\otimes m_2}(G)
  \bigcap K \in \C L^{1,\infty}_{m_1^{-1}\otimes m_2}(G)$.
\end{enumerate}
\end{corollary}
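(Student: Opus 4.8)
The plan is to derive all three equivalences from Theorem~\ref{thm-kernel-char1} combined with a complex interpolation argument, and I would organize the proof around the cycle $(i)\Rightarrow(ii)\Leftrightarrow(iii)$ together with the single nontrivial implication $(ii)\Rightarrow(i)$. The equivalence $(ii)\Leftrightarrow(iii)$ is essentially immediate from the characterization already established: applying Theorem~\ref{thm-kernel-char1}$(i)$ with $p=1$ shows that $A:\Cim{1}\to\Ciim{1}$ is bounded if and only if $K\in\C\mathcal{L}^{1,\infty}_{m_1^{-1}\otimes m_2}(G)$, while applying Theorem~\ref{thm-kernel-char1}$(ii)$ with $p=\infty$ (so that $q=1$) shows that $A:\Cim{\infty}\to\Ciim{\infty}$ is bounded if and only if $K\in\C L^{1,\infty}_{m_1^{-1}\otimes m_2}(G)$. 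Taking the conjunction of these two statements gives exactly $(ii)\Leftrightarrow(iii)$, and the implication $(i)\Rightarrow(ii)$ is trivial since it is obtained by specializing $p$ to the values $1$ and $\infty$.

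The substance of the corollary is therefore the implication $(ii)\Rightarrow(i)$, and here interpolation enters. The key structural fact I would record first is that the coorbit spaces form a retract of the weighted Lebesgue spaces: by Proposition~\ref{props-of-coorbit}, the analysis map $V_{\psi_1}:\Cim{p}\to L^p_{m_1}(G_1)$ is an isometry, the synthesis map $V_{\psi_1}^*:L^p_{m_1}(G_1)\to\Cim{p}$ is bounded, and $V_{\psi_1}^*V_{\psi_1}=I$ on $\Cim{p}$, with the analogous statements on $G_2$. These maps do not depend on $p$, so they provide a morphism of the interpolation couples. Since the multiplication $f\mapsto f\,m_1$ is a $p$-independent isometry $L^p_{m_1}(G_1)\cong L^p(G_1)$, the classical identity $[L^1(G_1),L^\infty(G_1)]_\theta=L^p(G_1)$ for $1/p=1-\theta$ transfers to $[L^1_{m_1}(G_1),L^\infty_{m_1}(G_1)]_\theta=L^p_{m_1}(G_1)$, and the retract property then yields $[\Cim{1},\Cim{\infty}]_\theta=\Cim{p}$, and likewise $[\Ciim{1},\Ciim{\infty}]_\theta=\Ciim{p}$.

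With these identifications the conclusion follows from the interpolation theorem for linear operators. Because both $\Cim{1}$ and $\Cim{\infty}$ embed continuously into the common distribution space $\H^\infty_{1/w_1}$ by \eqref{eq:f1}, and the two target spaces into $\H^\infty_{1/w_2}$, the fixed operator $A$ acts consistently on the compatible couple $(\Cim{1},\Cim{\infty})$. By hypothesis $(ii)$ it is bounded both as $\Cim{1}\to\Ciim{1}$ and as $\Cim{\infty}\to\Ciim{\infty}$, so interpolation gives boundedness of $A:[\Cim{1},\Cim{\infty}]_\theta\to[\Ciim{1},\Ciim{\infty}]_\theta$, that is, $A:\Cim{p}\to\Ciim{p}$ for every $1<p<\infty$; together with the two endpoints supplied by $(ii)$ this establishes $(i)$.

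The step I expect to be the main obstacle is making the interpolation of the coorbit spaces fully rigorous, in particular verifying the retract identification $[\Cim{1},\Cim{\infty}]_\theta=\Cim{p}$ and the transfer of $[L^1,L^\infty]_\theta=L^p$ through the fixed weight; the $L^\infty$-endpoint is the delicate point of complex interpolation, and one must confirm that the compatibility of $A$ on the couple, guaranteed by the common ambient distribution space, is enough to apply the operator interpolation theorem without further hypotheses.
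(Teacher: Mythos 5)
Your proof is correct, and it settles the one nontrivial implication $(ii)\Rightarrow(i)$ by a genuinely different route than the paper intends. The paper's entire argument is the sentence preceding the corollary: condition $(iii)$ says precisely that $k_A=V_\Psi K$ satisfies both classical Schur conditions, so Proposition~\ref{tao-prop-1} at the two endpoints plus interpolation \emph{between the Lebesgue spaces} (for the simultaneous Schur conditions one can even avoid abstract interpolation and argue directly with H\"older) shows that the integral operator $\mathfrak{A}$ is bounded from $L^p_{m_1}(G_1)$ to $L^p_{m_2}(G_2)$ for every $1\leq p\leq\infty$; the factorization $A=V_{\psi_2}^*\mathfrak{A}V_{\psi_1}$ of \eqref{eq:gh} and Proposition~\ref{props-of-coorbit} then transfer this to $A:\Cim{p}\rightarrow\Ciim{p}$, exactly as in the proof of Theorem~\ref{thm-kernel-char1}. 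You instead interpolate the coorbit spaces themselves, identifying $[\Cim{1},\Cim{\infty}]_\theta=\Cim{p}$ by the retract property and then applying abstract operator interpolation to $A$. This works: the retraction maps $V_{\psi_1}$ and $V_{\psi_1}^*$ are $p$-independent and consistent on the couples, Calder\'on's theorem $[L^1,L^\infty]_\theta=L^p$ is available since $G$ may be assumed $\sigma$-finite (as noted in Section~\ref{subsec:coorbit}), and the transfer through the fixed weight is harmless. Note, however, that your retract identification is the factorization \eqref{eq:gh} in disguise, so both proofs run on the same engine; the paper's packaging is lighter (only classical Schur/Riesz--Thorin on Lebesgue spaces, where the $L^\infty$ endpoint causes no difficulty), whereas yours invokes the complex method with an $L^\infty$ endpoint and the retract lemma, but yields a dividend of independent interest, namely that the coorbit spaces $\Cim{p}$ form a complex interpolation scale (a fact going back to Feichtinger and Gr\"ochenig). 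One point you should make explicit: ``$A$ bounded from $\Cim{p}$ to $\Ciim{p}$'' for $p>1$ must be read as boundedness of the canonical extension $V_{\psi_2}^*\mathfrak{A}V_{\psi_1}$ determined by the kernel $K$, since $\Ci{1}$ is not norm dense in $\Cim{\infty}$; this reading is also what makes your ``fixed operator $A$ acting consistently on the couple'' a legitimate observation rather than an additional hypothesis.
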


Clearly one can now translate every boundedness result for an integral
operator into a kernel theorem for coorbit spaces. As a simple, but
important example we offer a 
sufficient condition for regularizing
operators, i.e., of operators that map distributions to test
functions.

\begin{theorem} \label{regularize}
  Under the assumptions of Theorem~\ref{thm-kernel-2}, if the unique kernel of the operator $A$ satisfies $K\in
    \mathcal{C}o _\pi L^1_{ w} (G)$, then $A$ is bounded 
  from $ \mathcal{C}o _{\pi _1} L^\infty _{1/w_1} (G_1)$ to  $\mathcal{C}o
  _{\pi _2} L^1_{w_2} (G_2)$.
  \end{theorem}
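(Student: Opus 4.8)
The plan is to derive the result from the factorization \eqref{eq:gh}, $A = V_{\psi_2}^* \mathfrak{A} V_{\psi_1}$, combined with the simplest instance of Schur's test, namely that an integral operator with an $L^1$-kernel maps $L^\infty$ into $L^1$. First I would record what the hypothesis $K\in\mathcal{C}o_\pi L^1_w(G)$ says about the integral kernel of $\mathfrak{A}$. By \eqref{Vpsi-of-ess-kernel} this kernel is $k_A = V_\Psi K$, and since $V_\Psi:\mathcal{C}o_\pi L^1_w(G)\to L^1_w(G)$ is an isometry by Proposition~\ref{props-of-coorbit}\eqref{ana-iso}, the assumption is equivalent to $k_A\in L^1_w(G)$ together with the norm identity $\|k_A\|_{L^1_w(G)} = \|K\|_{\mathcal{C}o_\pi L^1_w(G)}$.

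Next I would establish the mapping property of the integral operator $\mathfrak{A}$ from \eqref{A-tilde}. For $F\in L^\infty_{1/w_1}(G_1)$ one has $|F(g_1)|\leq \|F\|_{L^\infty_{1/w_1}(G_1)}\,w_1(g_1)$ almost everywhere, so Tonelli's theorem yields
\begin{align*}
\|\mathfrak{A}F\|_{L^1_{w_2}(G_2)} &\leq \int_{G_2}\int_{G_1} |F(g_1)|\,|k_A(g_1,g_2)|\,dg_1\,w_2(g_2)\,dg_2 \\
&\leq \|F\|_{L^\infty_{1/w_1}(G_1)}\int_{G_1\times G_2}|k_A(g)|\,w_1(g_1)w_2(g_2)\,dg = \|F\|_{L^\infty_{1/w_1}(G_1)}\,\|k_A\|_{L^1_w(G)}.
\end{align*}
Hence $\mathfrak{A}$ is bounded from $L^\infty_{1/w_1}(G_1)$ to $L^1_{w_2}(G_2)$ with norm at most $\|K\|_{\mathcal{C}o_\pi L^1_w(G)}$. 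This is precisely the $L^\infty\to L^1$ endpoint of Schur's test, the one direction not listed in Proposition~\ref{tao-prop-1}; since the theorem asserts only sufficiency, this elementary estimate is all that is required and there is no need to prove a converse.

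Finally I would compose this with the two outer operators in \eqref{eq:gh}. By Proposition~\ref{props-of-coorbit}\eqref{ana-iso} the analysis operator $V_{\psi_1}$ is an isometry from $\CiP{\infty}$ into $L^\infty_{1/w_1}(G_1)$, and by Proposition~\ref{props-of-coorbit}\eqref{synth-cont} the synthesis operator $V_{\psi_2}^*$ is continuous from $L^1_{w_2}(G_2)$ to $\Cii{1}$. Chaining these through $A = V_{\psi_2}^*\mathfrak{A}V_{\psi_1}$ shows that $A$ is bounded from $\CiP{\infty}$ to $\Cii{1}$, as claimed. The one point demanding care is that the factorization \eqref{eq:gh} must be valid on the \emph{large} domain of distributions $\CiP{\infty}$ and not merely on test functions; this, however, is exactly the content of the computation \eqref{eq:ji1}--\eqref{eq:gh} underlying Theorem~\ref{thm-kernel-2}, so the remaining work reduces to the Schur estimate above and the bookkeeping of the weights $w_1$ and $w_2$.
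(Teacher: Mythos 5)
Your proof is correct and takes essentially the same route as the paper's: both factor $A = V_{\psi_2}^*\,\mathfrak{A}\,V_{\psi_1}$ as in the proof of Theorem~\ref{thm-kernel-char1} and observe that $k_A = V_\Psi K \in L^1_w(G)$ makes $\mathfrak{A}: L^\infty_{1/w_1}(G_1)\rightarrow L^1_{w_2}(G_2)$ bounded by the $L^\infty\to L^1$ endpoint of Schur's test. The only difference is that you write out the Tonelli estimate, the weight bookkeeping, and the composition with $V_{\psi_1}$ and $V_{\psi_2}^*$ explicitly, where the paper compresses all of this into a one-line reference to Schur's test.
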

  \begin{proof}
    Consider the integral operator $\mathfrak{A}$ as in the proof of Theorem~\ref{thm-kernel-char1} and observe $V_\Psi K=k_A\in L^1_{w}(G)$ is a sufficient condition for $\mathfrak{A}:L^\infty_{1/w_1}(G_1)\rightarrow L^1_{w_2}(G_2)$ to be bounded by Schur's test.
  \end{proof}

\subsection{Discretization}
Coorbit theory guarantees the discretization of the coorbit spaces via atomic decompositions and Banach frames.
For our purposes, it is sufficient to state a shortened and simplified
version of \cite[Theorem 5.3]{groe91}. Let $Y$ be one of the function
spaces  $L^p_m(G),L^{p,\infty}_m(G),$
or $ \mathcal{L}^{p,\infty}_m(G)$, and $Y_d$ the natural sequence
space associated to $Y$. 
\begin{proposition}\label{Xd-def}
If $\psi$ satisfies 
\begin{equation}\label{Bw}
\int_G \sup_{h\in gQ}|V_\psi\psi(h)|w(g)dg<\infty,
\end{equation}
for a compact neighborhood $Q$ of $e$, then there exists a discrete subset $\Lambda\subset G$, and constants $C_1,C_2>0$, such that 
\begin{equation}\label{xd-frame}
C_1\|f\|_{\C Y}\leq\|V_\psi f\|_{Y_d}\leq C_2\|f\|_{\C Y}, \qquad
\text{ for every } f\in\C Y \, . 
\end{equation}
\end{proposition}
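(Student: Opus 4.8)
The plan is to realise \eqref{xd-frame} as a special case of the general atomic--decomposition and Banach--frame machinery of coorbit theory, \cite[Theorem 5.3]{groe91}, by verifying its hypotheses for the three admissible choices of $Y$. The structural input I would record first is that each of $L^p_m(G)$, $L^{p,\infty}_m(G)$ and $\mathcal{L}^{p,\infty}_m(G)$ is a \emph{solid} Banach function space on $G$ that is invariant under left and right translations, with translation norms dominated by $w$ on account of \eqref{assumpt-on-w}; consequently $Y$ obeys the convolution relation $\|F\ast h\|_Y\le C\,\|F\|_Y\,\|h\|_{L^1_w(G)}$ for $F\in Y$ and $h\in L^1_w(G)$. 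Next I would fix a relatively separated, $Q$--dense family $\Lambda=\{g_i\}_{i\in\I}\subset G$ (these exist in any locally compact group) and let $Y_d$ be the associated sequence space, $\|(c_i)\|_{Y_d}=\big\|\sum_i|c_i|\,\chi_{g_iQ}\big\|_Y$, so that $\|V_\psi f\|_{Y_d}=\big\|(V_\psi f(g_i))_{i\in\I}\big\|_{Y_d}$. Throughout, the single hypothesis I would exploit is that the control function $\Phi_Q(g):=\sup_{h\in gQ}|V_\psi\psi(h)|$ lies in $L^1_w(G)$, which is exactly \eqref{Bw}.

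For the upper bound I would pass through the local maximal function $M_QF(g):=\sup_{h\in gQ}|F(h)|$. Starting from the reproducing identity $V_\psi f=V_\psi f\ast V_\psi\psi$ (Proposition~\ref{props-of-coorbit}) and pulling the supremum inside the integral, one obtains the pointwise majorisation $M_Q(V_\psi f)(g)\le(|V_\psi f|\ast\Phi_Q)(g)$, since $\sup_{h\in gQ}|V_\psi\psi(x^{-1}h)|=\Phi_Q(x^{-1}g)$. By solidity and the convolution relation this yields $\|M_Q(V_\psi f)\|_Y\le C\,\|\Phi_Q\|_{L^1_w}\,\|V_\psi f\|_Y$. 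Since $|V_\psi f(g_i)|\le M_Q(V_\psi f)(g)$ for every $g\in g_iQ$ and the cells $g_iQ$ have bounded overlap, summing gives $\|V_\psi f\|_{Y_d}\le C_2\,\|V_\psi f\|_Y=C_2\,\|f\|_{\C Y}$, the right inequality in \eqref{xd-frame}.

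The left inequality is the harder half and is where the real work lies. I would build a bounded reconstruction operator $Y_d\to\C Y$. Fixing a bounded partition of unity $\{\phi_i\}$ subordinate to $\{g_iQ\}$, writing $b_i:=\int_G\phi_i(h)\,dh$, and starting from $f=V_\psi^\ast V_\psi f=\int_G V_\psi f(h)\,\pi(h)\psi\,dh$, I would compare $f$ with the discretised operator $Tf:=\sum_i b_i\,V_\psi f(g_i)\,\pi(g_i)\psi$. The error $f-Tf$ is controlled by the oscillation of $h\mapsto V_\psi f(h)\pi(h)\psi$ over the cells, which in turn is dominated in $\C Y$ by a multiple of $\|V_\psi f\|_Y$ carrying a factor that measures the deviation of $\Phi_Q$ from $|V_\psi\psi|$ and tends to $0$ in $L^1_w$ as $Q$ shrinks. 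Because \eqref{Bw} continues to hold for every $Q'\subseteq Q$, I may shrink $Q$ (and refine $\Lambda$) so that $\|I-T\|_{\C Y\to\C Y}<1$; then $T$ is invertible by Neumann series. Combining this with the boundedness of the synthesis map $(c_i)\mapsto\sum_i c_i\,\pi(g_i)\psi$ from $Y_d$ to $\C Y$ (proved by the same oscillation-plus-convolution estimate, dual to the upper bound) gives $\|f\|_{\C Y}\le\|T^{-1}\|\,\big\|\sum_i b_i\,V_\psi f(g_i)\,\pi(g_i)\psi\big\|_{\C Y}\le C\,\|V_\psi f\|_{Y_d}$, i.e.\ $C_1\|f\|_{\C Y}\le\|V_\psi f\|_{Y_d}$.

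The main obstacle, then, is this lower bound: one must simultaneously establish boundedness of the synthesis map $Y_d\to\C Y$ and invertibility of the approximation operator $T$. Both hinge on a quantitative oscillation estimate for $V_\psi\psi$ whose smallness is purchased by shrinking the neighbourhood $Q$; the hypothesis \eqref{Bw}, together with dominated convergence in $L^1_w$, is precisely what guarantees that this oscillation can be made arbitrarily small while keeping $\Lambda$ relatively separated and $Q$-dense. Once $\|I-T\|<1$ is achieved, the Neumann series closes the argument and the norm equivalence \eqref{xd-frame} follows.
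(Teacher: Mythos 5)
The paper offers no proof of this proposition at all: it is stated as a ``shortened and simplified version'' of \cite[Theorem 5.3]{groe91} and simply cited. Your proposal reconstructs exactly the standard Feichtinger--Gr\"ochenig argument underlying that citation --- the maximal-function majorization $M_Q(V_\psi f)\leq |V_\psi f|\ast\Phi_Q$ plus the convolution relation $Y\ast L^1_w\subseteq Y$ for the upper bound, and the oscillation estimate, synthesis bound, and Neumann-series inversion of the approximation operator (with $Q$ shrunk so that $\|\mathrm{osc}_Q V_\psi\psi\|_{L^1_w}$ is small, via continuity of $V_\psi\psi$ and dominated convergence under \eqref{Bw}) for the lower bound --- so it is correct in approach and matches what the paper delegates to the reference.
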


\begin{corollary}\label{discrete-cor}
Let $\Lambda=\Lambda_1\times\Lambda_2\subset G$ a discrete set such
that $\{\pi(\lambda)\Psi\}_{\lambda\in\Lambda}$ satisfies
\eqref{xd-frame} for  $\C L_{m_1^{-1}\otimes m_2}^{p,\infty}(G)$ and
$\C \mathcal{L}_{m_1^{-1}\otimes m_2}^{p,\infty}(G)$. If $A$ is a bounded
operator from $\Ci{1} $ to $
\CiiP{\infty}$ with kernel $K$, then  the following holds: \\
$(i)$ $A:\mathcal{C}o _{\pi _1} L^1_{m_1}(G_1) \rightarrow \mathcal{C}o _{\pi _2}
L^p_{m_2}(G_2)$ is bounded  if and only
if  \begin{equation}\label{K_in_ell^p,infty} 
\sup_{\lambda_1\in \Lambda_1}\left(\sum_{\lambda_2\in\Lambda_2}|V_{\Psi}K(\lambda)(m_1^{-1}\otimes m_2)(\lambda)|^p\right)^{1/p}<\infty.
\end{equation}
$(ii)$ Likewise  $A:\mathcal{C}o _{\pi _1} L^p_{m_1}(G_1) \rightarrow \mathcal{C}o _{\pi _2}
L^\infty _{m_2}(G_2)$ is bounded if and only if  \begin{equation}\label{K_in_ell^q,infty}
\sup_{\lambda_2\in \Lambda_2}\left(\sum_{\lambda_1\in\Lambda_1}|V_{\Psi}K(\lambda)(m_1^{-1}\otimes m_2)(\lambda)|^q\right)^{1/q}<\infty.
\end{equation}
\end{corollary}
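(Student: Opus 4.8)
The plan is to combine the continuous characterization of
Theorem~\ref{thm-kernel-char1} with the discretization principle of
Proposition~\ref{Xd-def}, so that the proof reduces to identifying the
natural sequence space $Y_d$ attached to each of the mixed norm spaces
$\mathcal{L}^{p,\infty}_{m_1^{-1}\otimes m_2}(G)$ and
$L^{q,\infty}_{m_1^{-1}\otimes m_2}(G)$. Since $\Lambda=\Lambda_1\times\Lambda_2$
is a product set and the admissible vector is the simple tensor
$\Psi=\psi_2\otimes\psi_1$, the factorization \eqref{wavelet-factors} of the
wavelet transform will guarantee that the discrete norms split into an
iterated $\sup$--$\ell^p$ (resp. $\sup$--$\ell^q$) expression.

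For part $(i)$ I would first invoke Theorem~\ref{thm-kernel-char1}$(i)$:
the operator $A$ is bounded from $\Cim{1}$ to $\Ciim{p}$ if and only if
$K\in\C\mathcal{L}^{p,\infty}_{m_1^{-1}\otimes m_2}(G)$, with
$\|A\|_{Op}\asymp\|K\|_{\C\mathcal{L}^{p,\infty}_{m_1^{-1}\otimes m_2}(G)}$.
Next I apply Proposition~\ref{Xd-def} with
$Y=\mathcal{L}^{p,\infty}_{m_1^{-1}\otimes m_2}(G)$ and the admissible vector
$\Psi$; by hypothesis $\{\pi(\lambda)\Psi\}_{\lambda\in\Lambda}$ satisfies
\eqref{xd-frame}, so $\|K\|_{\C Y}\asymp\|V_\Psi K\|_{Y_d}$. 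It then remains to
read off $Y_d$ from the definition \eqref{eq:mixed-2} of the
$\mathcal{L}^{p,\infty}$ norm: the essential supremum over $x\in G_1$ becomes a
supremum over $\lambda_1\in\Lambda_1$ and the $L^p$-integral over $y\in G_2$
becomes an $\ell^p$-sum over $\lambda_2\in\Lambda_2$, the weight
$m_1^{-1}\otimes m_2$ being sampled at the grid points. This yields exactly the
quantity in \eqref{K_in_ell^p,infty}, and hence $A$ is bounded if and only if
that quantity is finite.

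Part $(ii)$ is entirely parallel: I use Theorem~\ref{thm-kernel-char1}$(ii)$ to
characterize boundedness of $A:\Cim{p}\to\Ciim{\infty}$ by
$K\in\C L^{q,\infty}_{m_1^{-1}\otimes m_2}(G)$, and then apply
Proposition~\ref{Xd-def} with $Y=L^{q,\infty}_{m_1^{-1}\otimes m_2}(G)$. Here
the definition \eqref{eq:mixed-1} has the roles of the two variables
interchanged relative to \eqref{eq:mixed-2}, so the discrete norm $Y_d$ is the
supremum over $\lambda_2\in\Lambda_2$ of the $\ell^q$-sum over
$\lambda_1\in\Lambda_1$, which is precisely \eqref{K_in_ell^q,infty}.

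The main obstacle I anticipate is the identification of $Y_d$: one must unwind
the definition of the ``natural sequence space associated to $Y$'' from
\cite{groe91} and check that, for the \emph{mixed} norm spaces
\eqref{eq:mixed-1}--\eqref{eq:mixed-2} over the product group $G_1\times G_2$
sampled along the product lattice $\Lambda_1\times\Lambda_2$, it genuinely
factors into the iterated $\sup$--$\ell^p$ (resp. $\sup$--$\ell^q$) norm
appearing in the statement. This hinges on the product structure of $\Lambda$
together with the tensor factorization \eqref{wavelet-factors}, and on the fact
that the sampled weights $(m_1^{-1}\otimes m_2)(\lambda)$ are comparable to the
continuous weight on each cell of the covering $\{\lambda Q\}$ underlying
Proposition~\ref{Xd-def}; the $w_j$-moderateness of $m_j$ is what makes this
comparison uniform and thus keeps the equivalence constants independent of
$\lambda$.
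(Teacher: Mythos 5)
Your proposal is correct and follows essentially the same route as the paper's own (very terse) proof: invoke Theorem~\ref{thm-kernel-char1} to characterize boundedness by membership of $K$ in $\C \mathcal{L}^{p,\infty}_{m_1^{-1}\otimes m_2}(G)$ (resp.\ $\C L^{q,\infty}_{m_1^{-1}\otimes m_2}(G)$), then use the hypothesized frame inequality \eqref{xd-frame} to replace the continuous coorbit norm by the equivalent discrete norm, which is exactly \eqref{K_in_ell^p,infty} (resp.\ \eqref{K_in_ell^q,infty}). The identification of $Y_d$ that you flag as the main obstacle is precisely what the paper leaves implicit in citing \cite{groe91}, so your extra care there only fills in detail rather than diverging from the argument.
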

\proof $(i)$ By Theorem~\ref{thm-kernel-char1} $A$ has a kernel in
$\C\mathcal{L}^{p,\infty}(G)$, and   
$\|A\|_{Op}\asymp \| K\|_{\C\mathcal{L}^{p,\infty}(G)}$. By
\eqref{xd-frame}, the expression in \eqref{K_in_ell^p,infty} is an
equivalent norm for  $\| K\|_{\C\mathcal{L}^{p,\infty}(G)}$. 
The proof of $(ii)$ works exactly the same. \pbox

\section{Examples}\label{sec:ex} 

\subsection{Modulation spaces}\label{subsec:modulation}

The  Weyl-Heisenberg group $G_{WH}=\R^d\times \R^d\times \mathbb{T}$
is defined  by the group law
 $$
 (x,\omega,e^{2\pi i\tau})\cdot(x^\prime,\omega^\prime,e^{2\pi i\tau^\prime})=(x+x^\prime,\omega+\omega^\prime,e^{2\pi i(\tau+\tau^\prime-x\cdot\omega^\prime)}).
 $$
 Let $T_xf(t):=f(t-x)$ denote the translation, and $M_\omega
 f(t):=e^{2\pi i\omega t}f(t)$ the modulation operator. The operator 
 $\pi_{WH}(x,\omega,\tau)=e^{2\pi i \tau}M_\omega T_x$ for $(x,\omega
 ,\tau) \in G_{WH}$  defines a unitary square-integrable
 representation of $G_{WH}$ acting  on $L^2(\R^d)$,  for which every
 nonzero vector in $L^2(\R^d)$ is admissible.
 Since the phase factor $e^{2\pi i \tau }$ is irrelevant for the
 definition of coorbit spaces, it is  
convenient to drop the trivial third component, and consider
the time-frequency shift $\pi(x,\omega)=\pi_{WH}(x,\omega,1)= M_\omega T_x$. Formally, we treat
the  projective representation $\pi $ of $\mathbb{R}^{2d}$ instead of
the  unitary representation $\pi _{WH}$ of $G_{WH}$. 
The  transform corresponding  to $\pi$  is nothing else but  the 
short-time Fourier transform 
$$
V_\psi f(x,\omega)=\langle f,M_\omega T_x\psi\rangle =\int_{\R^d}f(t)\overline{\psi(t-x)}e^{-2\pi i\omega t}dt,\ f,\psi\in L^2(\R^d).
$$
The coorbit spaces
associated to $\pi_{WH}$  coincide therefore with the   coorbit spaces
associated to $\pi $. These are the modulation spaces $M_m^p(\R^d)$
which   were first introduced by Feichtinger in \cite{fei83} as certain decomposition spaces and
subsequently were  identified with the coorbit
spaces of the Heisenberg group $\mathcal{C}o _{\pi _{WH}} L^p_m
(G_{WH}) = \mathcal{C}o _\pi 
L^p_m(\rdd)=M^p_m(\R^d)$~\cite{FGchui}. 
We refer to the standard textbooks \cite{fo89,groe1} for more information on time-frequency analysis.

Theorem~\ref{thm-kernel-2} asserts that every bounded operator from
$M^1_w(\R^d) \hspace{-0.01cm} =\hspace{-0.01cm}  \mathcal{C}o _{\pi _{WH}} L^1_w(G_{WH})\hspace{-0.03cm} = \C L^1_w(\R ^{2d}) $ to $M^\infty_{1/w}(\R^d) =
\mathcal{C}o _{\pi _{WH}} L^\infty_{1/w}(G_{WH}) =
\C L^\infty_{1/w}(\R ^{2d})$  possesses a
kernel $K\in  \mathcal{C}o _{\pi _{WH} \otimes\pi _{WH} } L^\infty
_{w^{-1} \otimes w^{-1} } (G_{WH} \times G_{WH})$, such that $\langle
Af,g \rangle = \langle K, g \otimes f \rangle $ for $f,g\in M^1_w(\R^d)$. 
Let us elaborate in detail what the kernel theorem asserts in this
case: for $g_i = (x_i,\omega _i, \tau _i) \in G_{WH}, i=1,2$, the
tensor representation $\pi _{WH} \otimes \pi _{WH} $ acts on the
simple tensor $(\psi _2 \otimes \psi _1)(t_2,t_1) = \psi
_2(t_2)\overline{\psi _1(t_1)} \in
L^2(\R ^d) \otimes L^2(\R ^d) \cong L^2(\R ^{2d})$ as
\begin{align*}
  \pi _{WH}\otimes  \pi _{WH} (g_2,g_1) (\psi _2\otimes \psi _1)(t_2,t_1)
  &= e^{2\pi i (\tau _1 - \tau _2)}  M_{\omega _2}T_{x_2}\psi _2(t_2)
    \overline{M_{\omega _1}T_{x_1}\psi _1(t_1)} \\
  &= e^{2\pi i (\tau _1 - \tau _2)} M_{(\omega _2,-\omega_1)}
    T_{(x_2,x_1)}(\psi _2 \otimes \psi _1)(t_2,t_1) \, .
\end{align*}
Thus except for the phase factor $ e^{2\pi i (\tau _1 - \tau _2)}$ 
the tensor representation $  \pi _{WH}\otimes  \pi _{WH}$ is just the
time-frequency shift $M_{(\omega _2,-\omega_1)}
    T_{(x_2,x_1)}$ acting on $L^2(\R ^{2d})$. Consequently, the coorbit
    spaces with respect to  the product group $G_{WH} \otimes G_{WH}$
    are again modulation spaces, this time on $\R ^{2d}$. 
    For the coorbit of $L^\infty $ we compare the norms  
$$
\|K\|_{M^\infty(\R^{2d})}=\sup_{(x_1,x_2,\omega_1,\omega_2)\in\R^{4d}}\big|\big\langle
K,M_{(\omega _1, \omega_2)} T_{(x_1,x_2)}(\psi_2\otimes\psi_1)\big\rangle\big|,
$$
and 
$$
\|K\|_{\mathcal{C}o_{\pi\otimes\pi}
  L^\infty(\R^{4d})}=\sup_{(x_1,\omega_1)\otimes(x_2,\omega_2)\in\R^{4d}}\left|\big\langle
  K,\big(\pi(x_1,\omega_1)\otimes\pi(x_2,\omega_2)\big)(\psi_2\otimes\psi_1)\big\rangle\right|, 
$$
which are obviously equal. 
 In this case   Theorem~\ref{thm-kernel-2} is therefore
    just Feichtinger's kernel theorem: \emph{For 
      $A: M^1(\R ^d) \to M^\infty (\R ^d)$  there  exists   a unique
      kernel  $K\in M^\infty(\R^{2d})$, such that $\langle Af,g\rangle =
      \langle K, g\otimes f \rangle$.}


The recent extension of Feichtinger's kernel theorem by    Cordero and
Nicola \cite{coni17} can be seen in the same light. Let us explain the
difference in the formulations. 
Our approach considers the generalized wavelet transform  $$
V_{\Psi}K(x_1,\omega_1,x_2,\omega_2)=\langle K,\pi
_{WH}(x_2,\omega_2,1)\otimes \pi _{WH}(x_1,\omega_1,1)(\psi _2 \otimes  \psi_1)\rangle,
$$
of the kernel. The conditions of Theorem~\ref{thm-kernel-char1} are
formulated by mixed norms acting simultaneously on the variables
$(x_2,\omega _2)$ and on  $(x_1,\omega _1)$. The treatment in~\cite{coni17}
uses the short-time Fourier transform on $\R ^{2d}$
$$
V_{\Psi}K(x_1,x_2,\omega_1,\omega_2)=\langle K,M_{(\omega_1,\omega_2)}
T_{(x_1,x_2)} \Psi\rangle \, ,
$$
which is  the same transform, except for the order of the
variables. In~\cite{coni17} it was therefore necessary to 
reshuffle the order of integration of time-frequency shifts and to
use the notion of mixed modulation spaces, which were  studied
in~\cite{bi10,MOP16}. The new insight of our formulation is that the
mixed modulation spaces are simply the coorbit spaces with respect to
the tensor product representation.


The special case of
Theorem~\ref{thm-kernel-char1} for the Weyl-Heisenberg group and the
weights $m_s(x,\omega,\tau )=(1+|x|+|\omega|)^s$ for $s\in \R $ states the
following:  
Fix $\sigma >0$ and  let $A$ be an operator from $M^1_{m_\sigma}(\R ^d) $ to
$M^\infty _{m_{-\sigma }}(\R ^d)$. Then for $|r|, |s| \leq \sigma $,
$1\leq p,q\leq \infty $ and $1/p+1/q=1$  we
have 

\medskip

\begin{tabular}{llll}
$(i)$ \ $A:M_{m_s}^{1}(\rd)\rightarrow M_{m_r}^{p}(\rd)$ &  bounded
  &$\Leftrightarrow$& $K\in \C \mathcal{L}^{p,\infty}_{m_{-s}\otimes
                      w_{r}}(\R^{4d})$,\\ 
$(ii)$ $A:M_{m_s}^{p}(\rd)\rightarrow M^{\infty}_{m_r}(\rd)$ & bounded &$\Leftrightarrow$& $K\in \C L^{q,\infty}_{ m_{-s}\otimes m_{r}}(\R^{4d})$.
\end{tabular}

Regularizing operators from $M^\infty$ to $M^1$ were studied recently
studied by Feichtinger and Jakobsen~\cite{feija18}:
they characterized a subclass of this space of operators by an  integral kernel in
$M^1(\R^{2d})$. The sufficiency of this result in a coorbit version is
contained in Theorem~\ref{regularize}. 

\subsection{Wavelet Coorbit Spaces and Besov Spaces}\label{subsec:wavelet}


The affine group $G_{\mathrm{aff}}=\R\times\R^*$ is given by the group law
$(x,a)\cdot(y,b)=(x+ay,ab)$ where $x,y\in \R$ and $a,b \in \R
\setminus \{0\}$.  Its left 
Haar measure is given by $\frac{dxda}{a^{2}}$. Let
$D_af(t)=a^{-1/2}f(t/a)$ denote the dilation operator. Then   $ (x,a)
\to \paff (x,a)= T_x D_a$ defines an 
unitary,
square-integrable representation 
of $G_{\mathrm{aff}}$ on $L^2(\R )$.  

Let now $f,\psi\in L^2 (\R )$. The continuous wavelet transform is defined  as
$$
W_\psi f(x,a):=\langle f,\paff(x,a)\psi\rangle=a^{-1/2}\int_\R f(t)\overline{\psi(a^{-1}(t-x))}dt,
$$
and the admissibility  condition \eqref{square-int} reads as 
$$
\int_{\R ^*}|\widehat{\psi}(\omega)|^2\frac{d\omega}{|\omega |}<\infty.
$$

 It is well-known that the coorbit
spaces associated to the representation $\paff $ are the homogeneous
Besov spaces. See the  textbooks \cite{dau92,mey93} for details and  further
expositions of  wavelet theory. For brevity, we  consider only the coorbit spaces with
respect to the  weighted $L^p(G_{\mathrm{aff}})$-spaces with the
weight function  $\nu _s(x,a)=\nu _s(a)=|a|^{-s}$ for $s\in \R $. Note that
$\nu _{-s}=1/\nu _s$. Then $\mathcal{C}o _{\paff}
L^p_{\nu _s}(G_{\mathrm{aff}})=\dot{B}_{p,p}^{s-1/2+1/p}(\R)$ by 
\cite[Section 7.2]{fegr88}. In particular $\mathcal{C}o _{\paff} L^1_{\nu _s}(G_{a})=\dot{B}_{1,1}^{s+1/2}(\R)$ and $\mathcal{C}o_{\paff}
L^\infty_{\nu _s}(G_{\mathrm{aff}})=\dot{B}_{\infty,\infty}^{s-1/2}(\R)$.
In this example  Theorem~\ref{thm-kernel-2} states that \emph{an operator 
  $A:\dot{B}_{1,1}^s(\R)\rightarrow \dot{B}_{\infty,\infty}^{-r}(\R)$ is bounded
  if and only if its associated kernel $K$ is in $ \mathcal{C}o _{\paff
    \otimes \paff } \, 
  L^{\infty}_{\nu _{-s-1/2}\otimes \nu _{-r-1/2}}(G_{\mathrm{aff}}^2)$.}
At first glance not much seems to have been gained by this 
formulation, but it turns out that the coorbit spaces of the tensor
product $\paff \otimes \paff $ of $G_{\mathrm{aff}}^2$ are well understood
in the theory of function spaces under the name of \emph{Besov spaces
  of dominating mixed  smoothness. } In particular,   $ \mathcal{C}o _{\paff
    \otimes \paff } \, 
  L^{\infty}_{\nu _{-s-1/2}\otimes \nu _{-r-1/2}}(G_{\mathrm{aff}}^2)$  can be identified with the Besov space of dominating
  mixed smoothness $S^{-s,-r}_{\infty,\infty}B(\R^2)$. See 
\cite{SU09}, Def.\ A.4 and~\cite{ST87}.
 Moreover, Theorem~\ref{thm-kernel-char1}   yields a characterization of continuous
operators between certain Besov spaces: 

\noindent \begin{tabular}{llll}
\\ $(i)$ \ $A:\dot{B}_{1,1}^{s}(\R)\rightarrow \dot{B}_{p,p}^{r}(\R)$\hspace{-0.1cm} & bounded &\hspace{-0.1cm}$\Leftrightarrow$&\hspace{-0.1cm}$K\in \mathcal{C}o_{\paff\otimes\paff}\mathcal{L}^{p,\infty}_{\nu _{-s+1/2}\otimes \nu _{r+1/2-1/p}}(G_{\mathrm{aff}}^2)$,
\\ $(ii)$ $A:\dot{B}_{p,p}^{s}(\R)\rightarrow
  \dot{B}_{\infty,\infty}^{r}(\R)$\hspace{-0.1cm} &  bounded &\hspace{-0.1cm}$\Leftrightarrow$&\hspace{-0.1cm}$K\in \mathcal{C}o_{\paff\otimes\paff} L^{q,\infty}_{\nu _{-s-1/2+1/p}\otimes \nu _{r+1/2}}(G_{\mathrm{aff}}^2)$.\\
\end{tabular}
\medskip

The case $(i)$ for $p=1$ was already formulated in a discrete version
by Meyer \cite[Section 6.9, Proposition 6]{mey93}.  
\begin{theorem}
Let
$\{\psi_{k,j}\}_{(k,j)\in\Z^2}$ be a wavelet basis with
$\psi_{k,j}(t)=2^{j/2}\psi(2^{j}t-k)$, and assume that $\psi $ has
compact support and satisfies sufficiently many moment conditions so
that the assumption of Proposition~\ref{Xd-def} is satisfied. 
An operator
$A:\dot{B}_{1,1}^0(\R)\rightarrow \dot{B}^0_{1,1}(\R)$ is bounded   if and only if 
$$
\sup_{(k\prime,j\prime)\in\Z^2}\sum_{(k,j)\in\Z^2}\big|\langle A\psi_{k\prime ,j\prime},\psi_{k,j}\rangle \big|2^{-j/2+j^\prime/2}\leq C.
$$
\end{theorem}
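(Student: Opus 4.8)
The plan is to recognize the statement as the discretized form of Theorem~\ref{thm-kernel-char1}$(i)$ (equivalently Corollary~\ref{discrete-cor}$(i)$) for the affine group, specialized to the dyadic wavelet lattice. First I would record the coorbit description of the spaces involved. Since $\mathcal{C}o_{\paff}L^p_{\nu_s}(G_{\mathrm{aff}})=\dot{B}^{s-1/2+1/p}_{p,p}(\R)$, the choice $p=1$, $s=-1/2$ gives $\dot{B}^0_{1,1}(\R)=\mathcal{C}o_{\paff}L^1_{\nu_{-1/2}}(G_{\mathrm{aff}})$, so the operator under consideration is exactly of the type treated in case $(i)$ with $m_1=m_2=\nu_{-1/2}$ and $p=1$. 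The weights $\nu_s$ are multiplicative on $G_{\mathrm{aff}}$, and fixing a submultiplicative weight $w$ dominating both $\nu_{1/2}$ and $\nu_{-1/2}$ renders $\nu_{-1/2}$ a $w$-moderate weight; by the embeddings \eqref{eq:f1} one then has $\mathcal{C}o_{\paff}L^1_w\subseteq\dot{B}^0_{1,1}\subseteq\mathcal{C}o_{\paff}L^\infty_{1/w}$, so a bounded $A\colon\dot{B}^0_{1,1}\to\dot{B}^0_{1,1}$ automatically maps test functions to distributions and possesses a kernel $K$ as required by the standing hypothesis of Corollary~\ref{discrete-cor}.

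Next I would invoke the discretization. The hypotheses on $\psi$ (compact support and enough vanishing moments) are precisely what makes the dyadic lattice $\Lambda=\{(k2^{-j},2^{-j}):(k,j)\in\Z^2\}$ an admissible sampling set in the sense of Proposition~\ref{Xd-def}, so that $\{\paff(\lambda)\psi\}_{\lambda\in\Lambda}$ is a Banach frame for the coorbit spaces $\mathcal{C}o_{\paff}L^p_{\nu_s}$; since $V_\Psi\Psi$ factors as in \eqref{wavelet-factors}, the product system $\{\pi(\lambda)\Psi\}_{\lambda\in\Lambda\times\Lambda}$ is a Banach frame for the product coorbit spaces on $G_{\mathrm{aff}}^2$. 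Applying Corollary~\ref{discrete-cor}$(i)$ with $p=1$ then characterizes the boundedness of $A$ by
$$\sup_{\lambda_1\in\Lambda}\ \sum_{\lambda_2\in\Lambda}\big|V_\Psi K(\lambda_1,\lambda_2)\big|\,(m_1^{-1}\otimes m_2)(\lambda_1,\lambda_2)<\infty.$$

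It remains to translate these abstract quantities into wavelet coefficients. With the $L^2$-normalization $\paff(x,a)=T_xD_a$ one checks $\paff(k2^{-j},2^{-j})\psi=\psi_{k,j}$, and by \eqref{Vpsi-of-ess-kernel} together with \eqref{essential-kern},
$$V_\Psi K(\lambda_1,\lambda_2)=k_A(\lambda_1,\lambda_2)=\langle A\,\paff(\lambda_1)\psi,\paff(\lambda_2)\psi\rangle=\langle A\psi_{k',j'},\psi_{k,j}\rangle,$$
where $\lambda_1\leftrightarrow(k',j')$ indexes the source and $\lambda_2\leftrightarrow(k,j)$ the target. Since $\nu_s(\cdot,2^{-j})=2^{js}$, the weight factorizes as $(m_1^{-1}\otimes m_2)(\lambda_1,\lambda_2)=\nu_{1/2}(\lambda_1)\,\nu_{-1/2}(\lambda_2)=2^{j'/2}\cdot 2^{-j/2}$, and substituting these identities yields exactly the stated condition $\sup_{(k',j')}\sum_{(k,j)}|\langle A\psi_{k',j'},\psi_{k,j}\rangle|\,2^{-j/2+j'/2}\le C$.

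The step I expect to be the main obstacle is the justification that the \emph{specific} dyadic lattice---rather than some abstract sufficiently dense set produced by the coorbit discretization---serves as the sampling set in Proposition~\ref{Xd-def}; this is exactly where the compact support and moment conditions on $\psi$ enter, and it reflects the classical fact that compactly supported wavelets with enough vanishing moments characterize the homogeneous Besov spaces. Once this frame property is secured on each factor, the remaining work (factorization of the window over the product group and the evaluation of $\nu_{\pm 1/2}$ on the lattice) is routine bookkeeping.
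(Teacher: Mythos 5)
Your proposal is correct and follows exactly the route of the paper's own (one-line) proof: specialize to $p=1$, $s=-1/2$ so that $\dot{B}^0_{1,1}(\R)=\mathcal{C}o_{\paff}L^1_{\nu_{-1/2}}(G_{\mathrm{aff}})$, use $k_A=V_\Psi K$, and apply Corollary~\ref{discrete-cor}$(i)$ on the dyadic lattice. Your verification of the lattice correspondence $\paff(k2^{-j},2^{-j})\psi=\psi_{k,j}$ and of the weight factorization $2^{j'/2}\cdot 2^{-j/2}$ simply fills in bookkeeping the paper leaves implicit.
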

\proof Set $p=1$, $s=-1/2$, recall that $k_A=V_\Psi K$ and apply Corollary~\ref{discrete-cor}.\pbox

  \subsection{The Case of Two Distinct Representations}\label{subsec:distinct-rep}
  
For most applications it suffices to consider a single group $G$ and
its product group $G\times G$. Our formulation with two different
group  allows us to study operators acting between coorbit spaces
associated with different group representations. Using the
representations of the Weyl-Heisenberg group and the affine group of  Sections~\ref{subsec:modulation} and
\ref{subsec:wavelet}, one can characterize the boundedness of
operators between certain modulation spaces and Besov spaces by
properties of their associated kernels.  
Theorem~\ref{thm-kernel-char1} now reads as follows:    

\medskip
\noindent \begin{tabular}{llll}
$(i)$\ \ \ $A:M^1_{m_s}(\R^d)\rightarrow \dot{B}^r_{p,p}(\R)$ &\hspace{-0.2cm}bdd.
   &\hspace{-0.2cm}$\Leftrightarrow$&\hspace{-0.2cm}$K \in
                       \mathcal{C}o_{\paff\otimes\pi_{WH}}\mathcal{L}^{p,\infty}_{
                       \widetilde{m}_{-s}\otimes \nu
                       _{r+1/2-1/p}}(G_{WH}\times G_{\mathrm{aff}})$,\\ 
$(ii)$ \ $A:M^p_{m_s}(\R^d)\rightarrow \dot{B}^r_{\infty,\infty}(\R)$ &\hspace{-0.2cm}bdd.
   &\hspace{-0.2cm}$ \Leftrightarrow$& \hspace{-0.2cm}$K\in \mathcal{C}o_{\paff \otimes\pi_{WH}}L^{q,\infty}_{m_{-s}\otimes \nu _{r+1/2}}(G_{WH}\times G_{\mathrm{aff}})$,
\\  $(iii)$ $A:\dot{B}^r_{1,1}(\R)\rightarrow M_{m_s}^p(\R^d)$ &\hspace{-0.2cm}bdd.
   &\hspace{-0.2cm}$\Leftrightarrow$ &\hspace{-0.2cm}$K\in \mathcal{C}o_{\pi_{WH}\otimes\paff }\mathcal{L}^{p,\infty}_{m_{-r+1/2}\otimes \nu _s}(G_{a}\times G_{WH})$,
  \\ $(iv)$  $A:\dot{B}^r_{p,p}(\R)\rightarrow M_{m_s}^\infty(\R^d)$ &\hspace{-0.2cm}bdd.
   &\hspace{-0.2cm}$\Leftrightarrow$&\hspace{-0.2cm}$K\in \mathcal{C}o_{\pwh\otimes \paff }L^{q,\infty}_{\nu _{-r-1/2+1/p}\otimes m_s}(G_{a}\times G_{WH})$.\\
  \end{tabular}

  \vspace{3mm}
  
  As a special case one obtains a characterization of the
bounded operators $ A$ from $ \dot{B}^r_{1,1}(\R)$ to $ L^2(\R^d)$. Since $M^2(\R^d)=L^2(\R^d)$,   they are
completely characterized  by the
membership of their kernel in   
$\mathcal{C}o_{\paff \otimes\pi_{WH}}\mathcal{L}^{2,\infty}_{1\otimes
  m_{-r+1/2}}( G_{\mathrm{aff}}\times G_{WH})$. 
\end{document}